\documentclass[11pt]{article}
\usepackage{amssymb}
\usepackage{amsfonts}
\usepackage{amsmath}
\usepackage{mathrsfs}
\usepackage{graphicx}
\usepackage{amsbsy}
\usepackage{theorem}
\usepackage{color}
\usepackage{tikz}
\usepackage{bbm}
\usepackage[normalem]{ulem}
%
 \textheight 23truecm
 \textwidth 16.7truecm
 \footskip 1.3truecm
 \voffset=-2truecm
 \hoffset=-2truecm


\newtheorem{theorem}{Theorem}[section]
\newtheorem{lemma}[theorem]{Lemma}

\def\thetheorem{\thesection.\arabic{theorem}}
\def\thesection{\arabic{section}}

\def\theequation {\thesection.\arabic{equation}}
%
\def\beq{\begin{equation}\displaystyle}
\def\eeq{\end{equation}}
\def\bel{\begin{equation} \displaystyle \begin{array}{l} }
\def\eel{\end{array} \end{equation} }
\def\bell{\begin{equation} \displaystyle \begin{array}{ll}  }
\def\eell{\end{array} \end{equation} }

\def\bea{\begin{eqnarray}}
\def\eea{\end{eqnarray} }
\def\bean{\begin{eqnarray*}}
\def\eean{\end{eqnarray*} }
\newenvironment{proof}{\noindent{\bf Proof.~}}
{{\mbox{}\hfill {\small \fbox{}}\\}}
\catcode`@=11
\renewcommand\appendix{\bigskip {\noindent \Large \bf Appendix}
  \setcounter{section}{0}%
  \setcounter{subsection}{0}%
\setcounter{equation}{0}%
\setcounter{theorem}{0}%
\def\thetheorem{A.\arabic{theorem}}
\def\theequation {A.\arabic{equation}}}
\catcode`@=12

\def\RR{\mathbb{R}}
\def\VV{\mathbb{V}}
\def\NN{\mathbb{N}}
\def\ZZ{\mathbb{Z}}
\def\eps{\varepsilon}
\def\pa{\partial}
\def\DT{\Delta t}
\def\DX{\Delta x}
\def\DY{\Delta y}



\title{Numerical scheme for kinetic transport equation with internal state\thanks{This work was supported by Japan-France Integrated action Program (SAKURA), Grant number JPJSBP120193219.}}

\author{Nicolas Vauchelet\thanks{Universit\'e Sorbonne Paris Nord, Laboratoire Analyse, G\'eom\'etrie et Applications, LAGA, CNRS UMR 7539, F-93430, Villetaneuse, France.
    (\texttt{email : vauchelet@math.univ-paris13.fr}).}
  \and Shugo Yasuda\thanks{University of Hyogo, Graduate School of Simulation Studies, Kobe 650-0047, Japan 
  (\texttt{email : yasuda@sim.u-hyogo.ac.jp}).}
}



\begin{document}

\maketitle

\begin{abstract}
  We investigate the numerical discretization of a two-stream kinetic system with an internal state, such system has been introduced to model the motion of cells by chemotaxis. This internal state models the intracellular methylation level. It adds a variable in the mathematical model, which makes it more challenging to simulate numerically.
  Moreover, it has been shown that the macroscopic or mesoscopic quantities computed from this system converge to the Keller-Segel system at diffusive scaling or to the velocity-jump kinetic system for chemotaxis at hyperbolic scaling.
  Then we pay attention to propose numerical schemes uniformly accurate with respect to the scaling parameter. We show that these schemes converge to some limiting schemes which are consistent with the limiting macroscopic or kinetic system. This study is illustrated with some numerical simulations and comparisons with Monte Carlo simulations.
\end{abstract}

\medskip
\textbf{Keywords}:
  Asymptotic-preserving scheme, chemotaxis, kinetic-transport model with internal state, well-balanced scheme.

\medskip
\textbf{AMS Classification}:
  65M08, 65N08, 65M12, 92C17.

\section{Introduction}

Kinetic equations are by now widely used to model the motion of bacteria by so-called 'run-and-tumble' process, i.e. an alternance of forward-moving runs with reorienting tumbles. In a simple mathematical model, bacteria respond to a given external chemical signal only by modulating their probability to tumble.
Then, the motion of bacteria may be described by the dynamics of their distribution function, which corresponds to the probability to find a bacteria at time $t$ with position $x$ and velocity $v$. This mathematical approach has been introduced in \cite{ODA} and has attracted the attention of many mathematical works, in particular it has been shown that Keller-Segel equations may be derived at diffusive scaling (see e.g. \cite{OH,ErbanOthmer04,DS,HP} and references therein).

Actually, the response of bacteria to signal changes is governed by a sophisticated chemotactic signal transduction pathway. It involves a rapid response of the cell to the external signal change called 'excitation', and a slow 'adaptation' which allows the cell to subtract out the background signal.
In order to describe the multiscale mechanism between the intracellular chemo-sensory system and the individual and collective motions of bacteria,
 more elaborated kinetic models have been proposed.
In \cite{ErbanOthmer04, ErbanOthmer07}, a cell-based model which incorporates a linear cartoon description of the excitation and adaptation response of a cell has been introduced.
The signal transduction pathway has been studied in e.g. \cite{hazel,OXX}. We refer to \cite{OXX,tu,ZSOT,X} where the case of bacteria {\it E. coli} and the link between cell-based models and continuum models have been extensively studied.

In this paper, we consider the simplest description of the biochemical pathways, in which a single additional variable $m \geq 0$, which represents the intracellular methylation level, is used.
The methylation has an equilibrium level $M$, depending on time $t$ and position $x$.
Using $F(m,M)$ as the adaptation rate, the intracellular adaptation dynamics is governed by a differential equation 
\begin{equation*}
\frac{dm}{dt} = F(m,M(t,x)).
\end{equation*}
In order to model the dynamics of a population of bacteria, the following pathway-based kinetic-transport model has been proposed in \cite{ErbanOthmer04,SWOT}.
Let $q(t,x,v,m)$ be the probability density function of bacteria at time $t$, position $x\in\RR^d$, moving at velocity $v\in \VV$, where $\VV$ is a bounded domain of $\RR^d$, and methylation level $m >0$. We have
\begin{equation}\label{eq:kineticm} \begin{cases}
\pa_t q+v\cdot\nabla_{x} q+\pa_m [F(m,M) q] = {\mathcal Q}[m,M](q),
\\[5pt]
q(x,v,m=0,t)=0.
\end{cases}
\end{equation}
The 'run' phase is modelled by the transport term in $x$, whereas the evolution of the methylation level is modelled by the $m$-derivative.
The 'tumble' phase is described in the right hand side by the tumbling operator ${\mathcal Q}[m,M](q)$. It is given by
\begin{equation}\label{eq:Q}
{\mathcal Q}[m,M](q)=\frac{1}{\|\VV\|} \int_\VV \left[
\lambda (m,M,v,v')q(t,x,v',m)  - 
 \lambda (m,M,v',v) q(t,x,v,m) \right] \,dv',
\end{equation}
where $\lambda(m,M,v,v')$ denotes the methylation dependent tumbling frequency from $v'$ to $v$.
Departing from such pathway-based kinetic equation several authors (see \cite{ErbanOthmer04, ErbanOthmer07, DS,XO,STY,X,ST,PTY}) developed the asymptotic theory which allows to recover, in the diffusion limit, macroscopic equations as the Keller-Segel (KS) or the flux-limited Keller-Segel (FLKS) system which governs the dynamics of the macroscopic quantities depending only on $t$ and $x$.
In \cite{JMB}, the authors show that, at hyperbolic scaling, the above pathway-based kinetic equation converges to the classical velocity jump kinetic equation for mesoscopic quantities depending on $t$, $x$, and $v$.

For \textit{E. coli} chemotaxis, it may be assumed that the tumbling frequency $\lambda$ in \eqref{eq:Q} depends only on $M-m$ \cite{SWOT}~: $\lambda(m,M,v,v') = \lambda_0 \Lambda(\frac{M-m}{\delta})$, where $\lambda_0>0$ is the mean tumbling frequency and $\delta>0$ is the stiffness of the chemotactic response.
Moreover, we consider the linear model
\begin{equation*}
\frac{dm}{dt} = F(m,M) = \frac{1}{\tau} (M - m),
\end{equation*}
where $\tau>0$ is the characteristic time of adaptation.
The equilibrium level $M$ is a function of extra-cellular chemical concentration and is assumed to be independent of time. Usually, it has a logarithmic dependancy as it has been experimentally evidenced for {\it E. coli} in \cite{kalinin}, which allows to consider that $\nabla_x M$ is uniform in space in exponential environment. We denote $G\in \RR^d$ the constant vector $G=\nabla_x M$.
Then, we introduce the new variable $y=\frac{M(x)-m}{\delta}$, and the unknown $p(t,x,v,y)=q(t,x,v,m)$.
After this change of variable and introducing a time scaling parameter $\sigma$, equation \eqref{eq:kineticm} rewrites (see \cite{JMB,PTY})
\begin{equation}\label{eq_kinetic_internal}
\sigma\partial_t p(t,x,v,y)+v\cdot\nabla_xp+\partial_y\left((v\cdot \frac{G}{\delta}-\frac{y}{\tau})p\right) = \lambda_0\Lambda(y)(\langle p \rangle - p ).
\end{equation}
%
An asymptotic analysis of Eq.~(\ref{eq_kinetic_internal}) depending on the scalings of $\lambda_0^{-1}$, $\delta$, $\sigma$, and $\tau$ may lead to various models.
In \cite{PTY}, several scalings have been investigated depending on the relative magnitude order of $\lambda_0^{-1}=\eps$, $\delta=\tau$, and $\sigma$.
In particular, the authors recover macroscopic flux-limited Keller-Segel models or Keller-Segel models as $\eps\to 0$ when $\delta=\tau=\eps$ and $\sigma=\eps^2$, when $\delta=\tau=\eps^2$, $\sigma=\eps$, and when $\delta=\tau=1$, $\sigma=\eps$.
In \cite{JMB}, the authors consider the hyperbolic case $\lambda_0^{-1}=1$ and $\delta=\tau=\eps$ for which the limiting model is the kinetic 'run-and-tumble' model.
Let us mention that these limits are singular in the sense that the distribution function converges to Dirac deltas.

In order to further illustrate those asymptotic behaviors numerically and elucidate the complicated multiscale mechanism between the intra-cellular pathway dynamics and the collective motions of bacteria, one needs an accurate and efficient numerical method which can address the asymptotic limits.
This paper is devoted to the numerical discretization and simulation of such kinetic system with internal variable \eqref{eq_kinetic_internal}.
In order to simplify the study and facilitate the presentation of the computations, we consider the problem in one dimensional spatial domain and assume that $v\in \{-1,1\}$. We denote by $p^{\eps,+}(t,x,y)$ and $p^{\eps,-}(t,x,y)$ the unknowns, depending on $\eps$, of our problem. In this one dimensional setting with $\delta=\tau$, equation \eqref{eq_kinetic_internal} rewrites
\begin{equation}
  \label{twovelo}
  \sigma \pa_t p^{\eps,\pm} \pm \pa_x p^{\eps,\pm} + \frac{1}{\tau} \pa_y((\pm G -y) p^{\eps,\pm}) = \pm \lambda_0 \frac{\Lambda(y)}{2}(p^{\eps,-} - p^{\eps,+}).
\end{equation}
This system is complemented with some initial data $p^{\eps,\pm}(t=0,y)$.
In this work, we would like to propose numerical schemes for \eqref{twovelo} which are able to deal with the diffusive limit $(\sigma = \lambda_0^{-1} = \eps\ll 1=\tau)$ and the hyperbolic limit $(\sigma=\lambda_0=1,\tau=\eps\ll 1)$.
This is the so-called asymptotic preserving (AP) property \cite{GT,Jin}.
There are several challenging issues in doing so. As already mentionned, one difficulty is due to the extra variable $y$, then usual AP numerical schemes for kinetic equations may not be directly applied. Another difficulty is that the distribution function has a singular limit when $\eps\to 0$ since it converges to Dirac deltas (see \cite{JMB,PTY}).
To overcome these difficulties, we follow the approach in \cite{book}, which proposes accurate numerical schemes preserving stationary states, and we couple it with a projection step which allows us to capture the Dirac deltas.

The paper is organized as follows. 
In Section \ref{sec:diffscal}, we investigate the numerical discretization of system \eqref{twovelo} at diffusive scaling. We propose a numerical scheme having the so-called asymptotic preserving property. Section \ref{sec:hyperbolic} deals with the hyperbolic scaling. Although we follow the same idea as for the diffusive scaling, the scheme should be adapted to this scaling, as it is emphasized in \S\ref{subsec:wrong}.
Numerical illustrations are proposed in Section \ref{Sec_Num} and compared with a Monte Carlo (MC) scheme.
This paper ends with a conclusion.
Finally, an appendix gathers a useful technical computation and a presentation of the MC scheme used for comparison.

\section{Diffusive scaling}
\subsection{System at diffusive scaling}\label{sec:diffscal}

We first consider system \eqref{twovelo} at the diffusive limit $\sigma = \lambda_0^{-1} = \eps, \tau=1$.
We denote $p^{\eps,+}(t,x,y)$, resp. $p^{\eps,-}(t,x,y)$, the distribution function of bacteria at time $t>0$, with internal state $y$, going to the positive, resp. negative, direction. Their dynamics is governed by the system~:
\begin{equation}
  \label{twovelo2}
  \eps^2 \pa_t p^{\eps,\pm} \pm \eps \pa_x p^{\eps,\pm} + \eps \pa_y((\pm G -y) p^{\eps,\pm}) = \pm \frac{\Lambda(y)}{2}(p^{\eps,-} - p^{\eps,+}).
\end{equation}
We first notice that it has been proved in \cite{PTY} that when the support of initial data ${p}^{\eps,\pm}(0,\cdot)$ is included into $[-|G|,|G|]$, then for any $t>0$ the support of ${p}^{\eps,\pm}(t,\cdot)$ is also included into $[-|G|,|G|]$. Thus we will assume that the support in $y$ of the initial data is included into $[-|G|,|G|]$ such that the domain of the internal variable $y$ is $[-|G|,|G|]$.

Before focusing on the numerical discretisation of \eqref{twovelo2}, we first explain how to perform formally the diffusive limit $\eps\to 0$ into \eqref{twovelo2}.
Let us assume formally that $p^{\eps,\pm}$ admit a Hilbert expansion $p^{\eps,\pm} = p_0^\pm + \eps p^\pm_1 + O(\eps^2)$.
Letting $\eps \to 0$ in \eqref{twovelo2}, we first deduce that $p_0^+ = p_0^- = p_0$.
Then, injecting the Hilbert expansion into \eqref{twovelo2} and identifying the term at order 1 in $\eps$, we get
\[
\pm \pa_x p_0 + \pa_y((\pm G-y) p_0 ) = \pm \frac{\Lambda(y)}{2} (p^-_1-p^+_1).
\]
Adding these two equations, we deduce $\pa_y (y p_0)=0$. Hence $p_0(t,x,y) = \bar{p}_0(t,x) \delta_{y=0}$ for some $\bar{p}_0$. We are left with the relation
\[
\pa_x p_0 + \pa_y( G p_0 ) = \frac{\Lambda(y)}{2} (p^-_1-p^+_1).
\]
Therefore, injecting the relation $p_0(t,x,y) = \bar{p}_0(t,x) \delta_{y=0}$, we get
\[
p^-_1-p^+_1 = \frac{2}{\Lambda(y)} \pa_x \bar{p}_0 \delta_{y=0} + \frac{2G\bar{p}_0}{\Lambda(y)} \delta'_{y=0}.
\]
Integrating over $y$, we deduce
\begin{equation}
  \label{eq:p1}
\int_\RR (p^-_1-p^+_1)\,dy  = \frac{2}{\Lambda(0)} \pa_x \bar{p}_0 + 2G\bar{p}_0 \frac{\Lambda'(0)}{\Lambda(0)^2}.
\end{equation}
Moreover, adding the two equations in \eqref{twovelo2} and letting formally $\eps\to 0$ leads to
\[
2 \pa_t p_0 + \pa_x(p_1^+ - p_1^-) + \pa_y(G(p_1^+-p_1^-) - y (p_1^+ + p_1^- )) = 0.
\]
Integrating over $y$ and injecting \eqref{eq:p1}, we get
\begin{equation}
  \label{eq:KS}
\pa_t \bar{p}_0 - \pa_x\left(\frac{1}{\Lambda(0)} \pa_x \bar{p}_0 + G\bar{p}_0 \frac{\Lambda'(0)}{\Lambda(0)^2}\right) = 0,
\end{equation}
which is the Keller-Segel equation for the density $\bar{p}_0$.
This formal computation has been done rigorously in \cite{PTY}.

\subsection{A consistent uniform numerical scheme}

From above computations, we observe that a difficulty in the design of the numerical scheme is the emergence of Dirac deltas at the diffusive limit. Then, an uniform numerical scheme should be able to captur this Dirac deltas.
In order to overcome this difficulty, we consider an approach consisting in two steps:
\begin{itemize}
\item In a first step we solve
  \begin{equation}
    \label{eq:diff1}
    \pa_t p^\pm - \frac{1}{\eps} \pa_y (yp^\pm) = 0.
  \end{equation}
  When $\eps\to 0$ this step may be seen as a projection step onto the set of Dirac deltas in $y=0$ which is the limiting set. Indeed taking $\eps\to 0$ into this equation leads formally to $\pa_y(yp^\pm)=0$ which implies that $p^\pm$ are Dirac deltas in $y=0$. A similar idea has also been used in \cite{AML}.
\item In a second step we consider an uniform discretization of
  \begin{equation}
    \label{eq:diff2}
    \eps\pa_t p^\pm \pm \pa_x p^\pm \pm G \pa_y p^\pm = \pm \frac{\Lambda(y)}{2\eps}(p^- - p^+).
  \end{equation}
\end{itemize}

More precisely, let us consider a cartesian grid $x_i=i\Delta x$ for $i\in \ZZ$ and $y_k = k\Delta y$ for $k=-K,\ldots,K$ such that $K\Delta y = |G|$. In this paper, to simplify the computations, we assume that $\Delta y = |G| \Delta x$, unless otherwise stated.

In the first step, we discretize \eqref{eq:diff1} by an implicit upwind scheme
\begin{subequations}
  \label{upwdiff}
  \begin{align}
    \label{upwdiff1}
    & p_{i,k}^{\pm,n+\frac 12}  = p_{i,k}^{\pm,n} - \frac{\DT}{\eps\DY}\Big(J_{i,k+\frac 12}^{\pm,n+\frac 12} - J_{i,k-\frac 12}^{\pm,n+\frac 12}\Big), \\
    \label{upwdiff2}
    & J_{i,k+\frac 12}^{\pm,n+\frac 12} = (-y_k)^+ p_{i,k}^{\pm,n+\frac 12} - (-y_{k+1})^- p_{i,k+1}^{\pm,n+\frac 12}.
  \end{align}
\end{subequations}
In this scheme, we use the usual notation $u^+ = \max\{0,u\}$ for the positive part, and $u^- = \max\{0,-u\}$ for the negative part.
We impose no-flux boundary conditions at the boundary~: $J_{i,K+\frac 12}^{\pm,n+\frac 12} = J_{i,-K-\frac 12}^{\pm,n+\frac 12} = 0$, for any $i$.

In the second step, we discretize \eqref{eq:diff2} thanks to a well-balanced asymptotic preserving scheme. However, this equation may be considered as a two-dimensional equation with the two directions $x$ and $y$. In order to be consistent with the asymptotic limit, it is not a good idea to split the directions as it is illustrated in \S\ref{subsec:wrong}.
Truly two-dimensional well-balanced schemes are not much developed yet. We mention the recent paper \cite{BG}, and \cite{Sinum_2D} in the particular case of radiative transfer equation.
However, the case at hand can be seen as a one dimension problem by introducing a new variable.
Indeed, let us denote $q^\pm(t,z) = p^\pm(t,x+z,y+zG)$, then \eqref{eq:diff2} rewrites
\[
\eps \pa_t q^\pm \pm \pa_z q^\pm = \pm \frac{\Lambda(y+zG)}{2\eps} (p^- - p^+).
\]
This latter equation can be discretized following the idea in \cite{GT,Numerische}.
We find the numerical scheme
\begin{align*}
  & q_{j}^{+,n+1} = q_j^{+,n+\frac 12} - \frac{\DT}{\eps\DX} (q_j^{+,n+1} - q_{j}^{-,n+1}) + \frac{2\DT G}{\DX(2\eps G + \bar{\Lambda}_{k-\frac 12})}(q_{j-1}^{+,n+\frac 12} - q_{j}^{-,n+\frac 12})  \\
  & q_{j-1}^{-,n+1} = q_{j-1}^{-,n+\frac 12} + \frac{\DT}{\eps\DX} (q_{j-1}^{+,n+1} - q_{j-1}^{-,n+1}) + \frac{2\DT G}{\DX(2\eps G + \bar{\Lambda}_{k-\frac 12})}(q_{j}^{-,n+\frac 12} - q_{j-1}^{+,n+\frac 12}),
\end{align*}
where $\bar{\Lambda}_{k-\frac 12}=\int_{y_{k-1}}^{y_k} \Lambda(y)\,dy$. 
Coming back to the notation $p^\pm$, it gives, when $G>0$,
\begin{subequations}
  \label{eq:pdiff}
  \begin{align}
    \label{eq:pplusdiff}
    p_{i,k}^{+,n+1} =\ & p_{i,k}^{+,n+\frac 12} - \frac{\DT}{\eps\DX} (p_{i,k}^{+,n+1} - p_{i,k}^{-,n+1}) \\
    & + \frac{2\DT G}{\DX(2\eps G + \bar{\Lambda}_{k-\frac 12})}(p_{i-1,k-1}^{+,n+\frac 12} - p_{i,k}^{-,n+\frac 12}) \nonumber  \\
    \label{eq:pmoinsdiff}
    p_{i-1,k-1}^{-,n+1} =\ & p_{i-1,k-1}^{-,n+\frac 12} + \frac{\DT}{\eps\DX} (p_{i-1,k-1}^{+,n+1} - p_{i-1,k-1}^{-,n+1}) \\
    & + \frac{2\DT G}{\DX(2\eps G + \bar{\Lambda}_{k-\frac 12})}(p_{i,k}^{-,n+\frac 12} - p_{i-1,k-1}^{+,n+\frac 12}). \nonumber
  \end{align}
\end{subequations}
When $G<0$, we have
\begin{subequations}
  \label{eq:pdiff2}
  \begin{align}
    \label{eq:pplusdiff2}
    p_{i,k}^{+,n+1} =\ & p_{i,k}^{+,n+\frac 12} - \frac{\DT}{\eps\DX} (p_{i,k}^{+,n+1} - p_{i,k}^{-,n+1})  \\
    & + \frac{2\DT G}{\DX(2\eps G + \bar{\Lambda}_{k+\frac 12})}(p_{i-1,k+1}^{+,n+\frac 12} - p_{i,k}^{-,n+\frac 12}) \nonumber  \\
    \label{eq:pmoinsdiff2}
    p_{i-1,k+1}^{-,n+1} =\ & p_{i-1,k+1}^{-,n+\frac 12} + \frac{\DT}{\eps\DX} (p_{i-1,k+1}^{+,n+1} - p_{i-1,k+1}^{-,n+1}) \\
    & + \frac{2\DT G}{\DX(2\eps G + \bar{\Lambda}_{k+\frac 12})}(p_{i,k}^{-,n+\frac 12} - p_{i-1,k+1}^{+,n+\frac 12}). \nonumber
  \end{align}
\end{subequations}
Finally, the resulting numerical scheme is given by \eqref{upwdiff}--\eqref{eq:pdiff} when $G>0$, or \eqref{upwdiff}--\eqref{eq:pdiff2} when $G<0$.
We mention that \eqref{eq:pdiff} and \eqref{eq:pdiff2} are implicit but can be easily solved by inverting a 2-by-2 matrix.

The following Lemma gives some properties for this scheme:
\begin{lemma}
  Let us assume that $\DY = |G| \DX$, and that $\Lambda\geq \Lambda_{\text{min}} > 0$.
  Let $(p_{i,k}^{\pm,n})_{i,k,n}$ be a sequence defined thanks to scheme \eqref{upwdiff}--\eqref{eq:pdiff} when $G>0$, or \eqref{upwdiff}--\eqref{eq:pdiff2} when $G<0$.
  Then, under the condition
  \begin{equation}\label{CFL}
    \DT \leq \frac 12 \Lambda_{\text{min}} \DX^2,
  \end{equation}
  the scheme is positive.
\end{lemma}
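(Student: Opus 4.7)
The plan is to analyze each substep of the scheme separately: Step 1 (projection, \eqref{upwdiff}) will be shown to preserve positivity unconditionally, and the CFL condition \eqref{CFL} will enter only in Step 2 (transport-relaxation, \eqref{eq:pdiff}/\eqref{eq:pdiff2}).

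For Step 1, at each fixed $i$ the scheme \eqref{upwdiff} is an implicit tridiagonal system in $(p^{\pm,n+1/2}_{i,k})_k$ with positive diagonal $1 + \frac{\DT}{\eps\DY}|y_k|$ (using $(-y_k)^+ + (-y_k)^- = |y_k|$) and non-positive off-diagonals $-\frac{\DT}{\eps\DY}(-y_{k\pm 1})^{\mp}$. The key observation is that, because of the no-flux boundary conditions $J^\pm_{\pm(K+1/2)}=0$, the scheme conserves discrete mass at each time step, so every column of the system matrix sums to $1$. Its transpose is therefore an $L$-matrix with positive row sums, and by the standard $M$-matrix monotonicity criterion its inverse has non-negative entries. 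Thus $p^{\pm,n+1/2}_{i,k}\geq 0$ whenever $p^{\pm,n}_{i,k}\geq 0$, with no CFL restriction.

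For Step 2, I would work cell-by-cell (take $G>0$; the case $G<0$ is identical using \eqref{eq:pdiff2}). At each grid point $(i,k)$, collecting \eqref{eq:pplusdiff} with \eqref{eq:pmoinsdiff} shifted to index $(i+1,k+1)$ yields the $2\times 2$ system
\begin{equation*}
\begin{pmatrix}1+a & -a \\ -a & 1+a\end{pmatrix}\begin{pmatrix}p^{+,n+1}_{i,k} \\ p^{-,n+1}_{i,k}\end{pmatrix} = \begin{pmatrix}A_{i,k} \\ B_{i,k}\end{pmatrix},
\end{equation*}
where $a=\DT/(\eps\DX)$, $c_{k\pm 1/2}=\frac{2\DT G}{\DX(2\eps G+\bar{\Lambda}_{k\pm 1/2})}$, and
\begin{equation*}
A_{i,k}=p^{+,n+1/2}_{i,k}+c_{k-1/2}\bigl(p^{+,n+1/2}_{i-1,k-1}-p^{-,n+1/2}_{i,k}\bigr),\qquad B_{i,k}=p^{-,n+1/2}_{i,k}+c_{k+1/2}\bigl(p^{-,n+1/2}_{i+1,k+1}-p^{+,n+1/2}_{i,k}\bigr).
\end{equation*}
The determinant is $1+2a>0$ and the inverse $\tfrac{1}{1+2a}\begin{pmatrix}1+a & a \\ a & 1+a\end{pmatrix}$ has non-negative entries, so substituting $A_{i,k},B_{i,k}$ expresses each of $p^{\pm,n+1}_{i,k}$ as a linear combination of the four known values $p^{+,n+1/2}_{i,k}$, $p^{-,n+1/2}_{i,k}$, $p^{+,n+1/2}_{i-1,k-1}$, and $p^{-,n+1/2}_{i+1,k+1}$. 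Reading off the coefficients, non-negativity of this combination is equivalent to the binding inequalities $c_{k\pm 1/2}\leq a/(1+a)$ (the other two coefficients, of the form $(1+a)-a\,c_{k\mp 1/2}$, are always positive). A short algebraic manipulation rewrites $c_{k\pm 1/2}\leq a/(1+a)$ as $2G\,\DT\leq \DX\,\bar{\Lambda}_{k\pm 1/2}$, and since $\bar{\Lambda}_{k\pm 1/2}\geq \Lambda_{\min}\DY = \Lambda_{\min}|G|\DX$ together with $\DY=|G|\DX$, this is implied exactly by the CFL condition \eqref{CFL}. The only mildly delicate step in the whole proof is expanding $(1+a)A_{i,k}+aB_{i,k}$ and $aA_{i,k}+(1+a)B_{i,k}$ to read off the correct positivity constraints; everything else is a direct matrix computation.
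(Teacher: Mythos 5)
Your proof is correct and follows essentially the same route as the paper: invert the $2\times 2$ implicit system, read off that the binding positivity constraints are $c_{k\pm 1/2}\le a/(1+a)$ (the paper phrases this as the min of $\DT/(\DT+\eps\DX)$ and its reciprocal), and reduce to the CFL via $\bar{\Lambda}_{k\pm 1/2}\ge \Lambda_{\min}\DY=\Lambda_{\min}|G|\DX$. The only difference is that you substantiate the positivity of the implicit upwind projection step with a column-sum/$M$-matrix argument where the paper simply calls it well-known; that addition is sound.
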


\begin{proof}
  It is well-known that the implicit upwind scheme \eqref{upwdiff} is positive.
  For the second step, we perform the computations for $G>0$, the case $G<0$ being similar.
  From \eqref{eq:pdiff}, we deduce by inverting the system
  \begin{align*}
    \left(1+\frac{2\DT}{\eps \DX}\right) p_{i,k}^{+,n+1} =\ & \left(1+\frac{\DT}{\eps\DX}\right) p_{i,k}^{+,n+\frac 12} + \frac{\DT}{\eps \DX} p_{i,k}^{-,n+\frac 12}  \\
    & + \frac{2\DT(\DT+\eps \DX) G}{\eps \DX^2 (2\eps G + \bar{\Lambda}_{k-\frac 12})}(p_{i-1,k-1}^{+,n+\frac 12} - p_{i,k}^{-,n+\frac 12})  \\
    & + \frac{2\DT^2 G}{\eps \DX^2 (2\eps G + \bar{\Lambda}_{k+\frac 12})}(p_{i+1,k+1}^{-,n+\frac 12} - p_{i,k}^{+,n+\frac 12}),
  \end{align*}
  and
  \begin{align*}
    \left(1+\frac{2\DT}{\eps \DX}\right) p_{i,k}^{-,n+1} =\ & \left(1+\frac{\DT}{\eps\DX}\right) p_{i,k}^{-,n+\frac 12} + \frac{\DT}{\eps \DX} p_{i,k}^{+,n+\frac 12}  \\
    & + \frac{2\DT^2 G}{\eps \DX^2 (2\eps G + \bar{\Lambda}_{k-\frac 12})}(p_{i-1,k-1}^{+,n+\frac 12} - p_{i,k}^{-,n+\frac 12}) \\
    & + \frac{2\DT(\DT+\eps \DX) G}{\eps \DX^2 (2\eps G + \bar{\Lambda}_{k+\frac 12})}(p_{i+1,k+1}^{+,n+\frac 12} - p_{i,k}^{-,n+\frac 12}).
  \end{align*}
  Then, this scheme is positive provided all coefficients are positive, which is equivalent to, for all $k\in \{-K,\ldots,K\}$,
  \[
  \frac{2\DT G}{\DX (2\eps G + \bar{\Lambda}_{k+\frac 12})} \leq \min\left\{\frac{\DT}{\DT+\eps \DX},\frac{\DT+\eps \DX}{\DT}\right\} = \frac{\DT}{\DT+\eps \DX}.
  \]
  Since we have by definition $\bar{\Lambda}_{k+\frac 12}= \int_{y_k}^{y_{k+1}} \Lambda(y)\,dy \geq \Lambda_{\text{min}} \DY$, then the above inequality is satisfied for any $k$ provided
  \[
  \frac{2 G}{\DX (2\eps G + \Lambda_{\text{min}} \DY)} \leq \frac{1}{\DT+\eps \DX},
  \]
  which is equivalent to \eqref{CFL} by recalling that $\DY=G\DX$ for $G>0$.

\end{proof}

\subsection{Asymptotic scheme}
In order to verify the consistency of the scheme at the diffusive limit,
we perform the limit $\eps\to 0$ in the above numerical scheme \eqref{upwdiff}--\eqref{eq:pdiff} when $G>0$ (the case $G<0$ being done in a similar way).
\begin{lemma}\label{lem:diff}
  Let the sequence $(p_{i,k}^{\pm,n+\frac 12})_{i,k,n}$ be computed thanks to scheme \eqref{upwdiff}.
  When $\eps\to 0$, we have $p_{i,k}^{\pm,n+\frac 12} \to \bar{p}_i^{\pm,n} \delta_{k=0}$, where $\displaystyle \bar{p}_i^{\pm,n}:=\sum_{k=-K}^K p_{i,k}^{\pm,n}$.
\end{lemma}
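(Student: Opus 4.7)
The plan is to read \eqref{upwdiff1} as a finite-volume conservation law in $k$, to multiply through by $\eps$ so that the limit $\eps\to 0$ forces the discrete $y$-fluxes to vanish, and then to read off the form of the limit cell by cell. I would start by recording mass conservation: summing \eqref{upwdiff1} over $k\in\{-K,\ldots,K\}$ and using the no-flux boundary values $J_{i,K+1/2}^{\pm,n+1/2}=J_{i,-K-1/2}^{\pm,n+1/2}=0$, the flux differences telescope and give
\[
\sum_{k=-K}^{K} p_{i,k}^{\pm,n+\frac12}=\sum_{k=-K}^{K}p_{i,k}^{\pm,n}=\bar p_{i}^{\pm,n}.
\]
Combined with the positivity of the implicit upwind discretisation (valid for every $\eps>0$), this yields the uniform bound $0\leq p_{i,k}^{\pm,n+\frac12}\leq \bar p_{i}^{\pm,n}$, independent of $\eps$, so that compactness provides a limit point $p_{i,k}^{\pm,\lim}$ along any subsequence $\eps_j\to 0$.

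Next, I would rewrite \eqref{upwdiff1} in the equivalent form
\[
\eps\,\DY\,\bigl(p_{i,k}^{\pm,n+\frac12}-p_{i,k}^{\pm,n}\bigr)=-\DT\bigl(J_{i,k+\frac12}^{\pm,n+\frac12}-J_{i,k-\frac12}^{\pm,n+\frac12}\bigr)
\]
and pass to the limit. The uniform bound kills the left-hand side, so in the limit $J_{i,k+\frac12}^{\pm,\lim}=J_{i,k-\frac12}^{\pm,\lim}$ for every $k\in\{-K,\ldots,K\}$. Starting from the boundary value $J_{i,-K-1/2}^{\pm,\lim}=0$ and telescoping, I conclude that $J_{i,k+\frac12}^{\pm,\lim}=0$ at every interior interface as well.

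The identification step is then purely algebraic from \eqref{upwdiff2}, by a case split on the sign of the centre $y_{k+1/2}$. For $k\geq 0$ one has $(-y_k)^+=0$ and $(-y_{k+1})^-=y_{k+1}>0$, so $J_{i,k+\frac12}^{\pm,\lim}=0$ forces $p_{i,k+1}^{\pm,\lim}=0$ for every $k+1\geq 1$. Symmetrically, for $k\leq -1$ one has $(-y_k)^+=-y_k>0$ and $(-y_{k+1})^-=0$, so $p_{i,k}^{\pm,\lim}=0$ for every $k\leq -1$. All mass therefore concentrates at the single index $k=0$, and the mass-conservation identity from the first step pins down $p_{i,0}^{\pm,\lim}=\bar p_{i}^{\pm,n}$. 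Uniqueness of this limit removes the need for subsequence extraction and gives precisely the convergence $p_{i,k}^{\pm,n+\frac12}\to \bar p_{i}^{\pm,n}\delta_{k=0}$. The heart of the argument is really the sign analysis of the upwind flux around $k=0$; everything else is mass conservation plus the observation that the implicit $y$-step acts, in the limit, as an exact projection onto the one-dimensional kernel of $-\partial_y(y\,\cdot)$, namely the Dirac mass at $y=0$.
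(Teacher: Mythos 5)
Your proof is correct and follows essentially the same route as the paper's: pass to the limit in \eqref{upwdiff1} to obtain that the discrete fluxes are constant in $k$, use the no-flux boundary condition to conclude they vanish, read off from the upwind flux \eqref{upwdiff2} by a sign analysis that $p_{i,k}^{\pm,n+\frac12}=0$ for $k\neq 0$, and identify the remaining mass at $k=0$ by conservation. Your version is in fact slightly more careful on two points --- you justify the passage to the limit by a uniform bound (positivity plus mass conservation) and compactness, and you derive the vanishing of the interior fluxes by telescoping from the boundary rather than via the paper's looser claim that $J_{i,\frac12}^{\pm,n+\frac12}=J_{i,-\frac12}^{\pm,n+\frac12}=0$ holds ``by definition'' --- but these are refinements of the same argument, not a different one.
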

\begin{proof}
  Letting $\eps\to 0$ into equation \eqref{upwdiff1}, we deduce that $J_{i,k+\frac 12}^{\pm,n+\frac 12} = J_{i,k-\frac 12}^{\pm,n+\frac 12}$, and for $k=0$, we have by definition \eqref{upwdiff2} $J_{i,\frac 12}^{\pm,n+\frac 12} = 0 = J_{i,-\frac 12}^{\pm,n+\frac 12}$. 
  Then, for any $i\in\ZZ$, $k\in\{-K,\ldots,K\}$, $n\in\NN$, the limit satisfies $(y_k)^+ p_{i,k}^{\pm,n+\frac 12} = (y_{k+1})^- p_{i,k+1}^{\pm,n+\frac 12}$. We deduce that for $k\neq 0$, we have $p_{i,k}^{\pm,n+\frac 12}=0$.
  Thus, for any $i\in\ZZ$, $k\in\{-K,\ldots,K\}$, $n\in\NN$, there exists a quantity, denoted $\bar{p}_i^{\pm,n+\frac 12}$, such that $p_{i,k}^{+,n+\frac 12}=\bar{p}_i^{\pm,n+\frac 12} \delta_{k=0}$.
  Finally, thanks to the no-flux boundary conditions, we deduce that $\bar{p}_i^{\pm,n+\frac 12} = \sum_k p_{i,k}^{\pm,n+\frac 12} = \sum_k p_{i,k}^{\pm,n} =: \bar{p}_{i}^{\pm,n}.$
\end{proof}

We may now pass to the limit into equation \eqref{eq:pdiff}.
We have
\begin{lemma}
  Formally, when $\eps\to 0$ the sequence $(p_{i,k}^{\pm,n})_{i,k,n}$ computed thanks to equations \eqref{upwdiff}--\eqref{eq:pdiff} converges towards $\frac 12 \rho_i^n \delta_{k=0}$, where the sequence $(\rho_i^n)_{i,n}$ solves the numerical scheme
\begin{equation}\label{eq:findiff}
\rho_i^{n+1} = \rho_i^n + \frac{\Delta t G}{\Delta x} \left(\frac{1}{\bar{\Lambda}_{\frac 12}} \rho_{i-1}^n - \frac{1}{\bar{\Lambda}_{-\frac 12}} \rho_{i}^n + \frac{1}{\bar{\Lambda}_{-\frac 12}} \rho_{i+1}^n - \frac{1}{\bar{\Lambda}_{\frac 12}} \rho_{i}^n\right).
\end{equation}
This scheme is consistent with the Keller-Segel equation \eqref{eq:KS}.
\end{lemma}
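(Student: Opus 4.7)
The plan is to pass formally to the limit $\eps\to 0$ in the update \eqref{eq:pdiff} of the second step, using Lemma \ref{lem:diff} to describe the input $p^{\pm,n+\frac12}$, and then to verify the consistency of the resulting scheme with \eqref{eq:KS} by a Taylor expansion.

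First I will localize the implicit system at each cell $(i,k)$. Equation \eqref{eq:pplusdiff} is already stated at $(i,k)$; substituting $(i,k)\mapsto(i+1,k+1)$ in \eqref{eq:pmoinsdiff} gives a second equation at the same cell, with $\bar{\Lambda}_{k+\frac12}$ in place of $\bar{\Lambda}_{k-\frac12}$. The two equations couple $p_{i,k}^{\pm,n+1}$ only through the stiff terms $\pm\frac{\DT}{\eps\DX}(p^+-p^-)$. Adding them cancels this stiff part and yields the cell-local identity
\begin{align*}
p_{i,k}^{+,n+1}+p_{i,k}^{-,n+1} =\ & p_{i,k}^{+,n+\frac12}+p_{i,k}^{-,n+\frac12}+\frac{2\DT G\,(p_{i-1,k-1}^{+,n+\frac12}-p_{i,k}^{-,n+\frac12})}{\DX(2\eps G+\bar{\Lambda}_{k-\frac12})} \\
&+\frac{2\DT G\,(p_{i+1,k+1}^{-,n+\frac12}-p_{i,k}^{+,n+\frac12})}{\DX(2\eps G+\bar{\Lambda}_{k+\frac12})};
\end{align*}
subtracting them produces $(1+\tfrac{2\DT}{\eps\DX})(p_{i,k}^{+,n+1}-p_{i,k}^{-,n+1})$ equal to a right-hand side that is bounded uniformly in $\eps$, so $p_{i,k}^{+,n+1}-p_{i,k}^{-,n+1}\to 0$ as $\eps\to 0$. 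This symmetry, together with the above sum identity, closes the limit into a single scalar relation and makes the ansatz $p_{i,k}^{\pm,n+1}\to\tfrac12\rho_i^{n+1}\delta_{k=0}$ self-consistent.

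Next I will inject the half-step limit $p_{i,k}^{\pm,n+\frac12}\to\tfrac12\rho_i^n\delta_{k=0}$ provided by Lemma \ref{lem:diff} (with the inductive hypothesis $\bar p_i^{\pm,n}=\tfrac12\rho_i^n$, which is preserved by the symmetry just established) into the sum identity. Only $k\in\{-1,0,1\}$ produce non-vanishing contributions: at $k=0$ the on-site values $p_{i,0}^{\pm,n+\frac12}\to\tfrac12\rho_i^n$ survive, giving $\rho_i^n-\frac{\DT G}{\DX}\bigl(\tfrac{1}{\bar{\Lambda}_{-\frac12}}+\tfrac{1}{\bar{\Lambda}_{\frac12}}\bigr)\rho_i^n$; at $k=+1$ only $p_{i-1,0}^{+,n+\frac12}$ survives in the first fraction, contributing $\tfrac{\DT G}{\DX\bar{\Lambda}_{\frac12}}\rho_{i-1}^n$; symmetrically at $k=-1$ only $p_{i+1,0}^{-,n+\frac12}$ survives, contributing $\tfrac{\DT G}{\DX\bar{\Lambda}_{-\frac12}}\rho_{i+1}^n$. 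Summing over $k$ produces $\rho_i^{n+1}$ on the left and exactly the right-hand side of \eqref{eq:findiff} on the right.

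For the consistency with \eqref{eq:KS}, I will Taylor expand $\bar{\Lambda}_{\pm\frac12}=\Lambda(0)\DY\pm\tfrac12\Lambda'(0)\DY^2+O(\DY^3)$, which yields $\tfrac{1}{\bar{\Lambda}_{\pm\frac12}}=\tfrac{1}{\Lambda(0)\DY}\mp\tfrac{\Lambda'(0)}{2\Lambda(0)^2}+O(\DY)$, together with $\rho_{i\pm 1}^n=\rho_i^n\pm\DX\,\partial_x\rho+\tfrac12\DX^2\partial_x^2\rho+O(\DX^3)$. Using $\DY=G\DX$, the $O(1/\DY)$ symmetric part of $\tfrac{1}{\bar{\Lambda}_{\pm\frac12}}$ combined with the second-order symmetric difference of $\rho$ produces the diffusion $\tfrac{1}{\Lambda(0)}\partial_x^2\rho$, while the $O(1)$ asymmetric part combined with the first-order antisymmetric difference produces the drift $\tfrac{G\Lambda'(0)}{\Lambda(0)^2}\partial_x\rho$, which together reconstitute the right-hand side of \eqref{eq:KS}. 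The main technical obstacle is precisely this last pairing: both the diffusion and the drift arise only after combining terms of different orders in $\DX$ and $\DY$, and care is needed to verify that the choice $\DY=G\DX$ produces the correct finite limit rather than cancelling the leading contributions.
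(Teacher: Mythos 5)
Your proposal is correct and follows essentially the same route as the paper: add \eqref{eq:pplusdiff} to the re-indexed \eqref{eq:pmoinsdiff} to cancel the stiff coupling, pass to the limit using Lemma \ref{lem:diff} (the paper likewise first notes $p^{+,n+1}_{i,k}=p^{-,n+1}_{i,k}$ in the limit, which you justify slightly more explicitly via the factor $1+\tfrac{2\DT}{\eps\DX}$), sum over $k$ to obtain \eqref{eq:findiff}, and then recover \eqref{eq:fdks} by the trapezoidal expansion of $\bar{\Lambda}_{\pm\frac12}$ with $\DY=G\DX$.
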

\begin{proof}
We first observe that when $\eps\to 0$ into \eqref{eq:pdiff}, we have $p_{i,k}^{+,n+1} = p_{i,k}^{-,n+1}$.
Hence, for any $n\geq 1$, using also Lemma \ref{lem:diff}, we get that $\displaystyle \lim_{\eps \to 0} p_{i,k}^{+,n+\frac 12} = \lim_{\eps \to 0} p_{i,k}^{-,n+\frac 12} = \bar{p}_{i}^n \delta_{k=0}$.

Moreover, adding the two equations \eqref{eq:pplusdiff} and \eqref{eq:pmoinsdiff} after a change of index, we get
\begin{align*}
  & p_{i,k}^{+,n+1} + p_{i,k}^{-,n+1} =  p_{i,k}^{+,n+\frac 12} + p_{i,k}^{-,n+\frac 12} \\
  & \quad + \frac{2\DT G}{\DX}\left(\frac{1}{2\eps G + \bar{\Lambda}_{k-\frac 12}}(p_{i-1,k-1}^{+,n+\frac 12} - p_{i,k}^{-,n+\frac 12}) + \frac{1}{2\eps G + \bar{\Lambda}_{k+\frac 12}}(p_{i+1,k+1}^{-,n+\frac 12} - p_{i,k}^{+,n+\frac 12})\right).
\end{align*}
Passing into the limit $\eps\to 0$, we get, thanks to Lemma \ref{lem:diff},
\begin{align*}
  & (\bar{p}_{i}^{n+1} + \bar{p}_{i}^{n+1})\delta_{k=0} =  (\bar{p}_{i}^{n} + \bar{p}_{i,k}^{n})\delta_{k=0} \\
  & \qquad + \frac{2\DT G}{\DX}\left(\frac{1}{\bar{\Lambda}_{k-\frac 12}}(\bar{p}_{i-1}^{n}\delta_{k=1} - \bar{p}_{i}^{n} \delta_{k=0}) + \frac{1}{\bar{\Lambda}_{k+\frac 12}}(\bar{p}_{i+1}^{-,n}\delta_{k=-1} - \bar{p}_{i}^{+,n}\delta_{k=0})\right).
\end{align*}
We denote by $\displaystyle \rho_i^n := \sum_{k=-K}^K (p_{i,k}^{+,n} + p_{i,k}^{-,n}) = 2\bar{p}_i^n$.
By summing over $k$ the latter identity, we obtain \eqref{eq:findiff}. \\

We now verify that the limiting scheme \eqref{eq:findiff} is consistent with the Keller-Segel equation \eqref{eq:KS}.

Recalling that $\bar{\Lambda}_{\frac 12} = \int_0^{\Delta y} \Lambda(y)\,dy$, we approximate thanks to a trapezoidal rule $\bar{\Lambda}_{\frac 12} \simeq \frac{\Lambda(0)+\Lambda(\Delta y)}{2} \Delta y \simeq \Lambda(0) \Delta y + \frac 12 \Delta y^2 \Lambda'(0)$. Then,
\[
\frac{1}{\bar{\Lambda_{\frac 12}}} \simeq \frac{1}{\Lambda(0)\Delta y(1+\frac 12 \Delta y\frac{\Lambda'(0)}{\Lambda(0)})} \simeq \frac{1}{\Lambda(0)\Delta y} \left(1-\frac 12 \Delta y\frac{\Lambda'(0)}{\Lambda(0)}\right).
\]
By the same token we deduce,
\[
\frac{1}{\bar{\Lambda_{-\frac 12}}} \simeq \frac{1}{\Lambda(0)\Delta y} \left(1+\frac 12 \Delta y\frac{\Lambda'(0)}{\Lambda(0)}\right).
\]
Injecting this approximation into \eqref{eq:findiff}, recalling the relation $\Delta y = |G|\Delta x$, we deduce
\begin{align}\label{eq:fdks}
  \rho_i^{n+1} \simeq\ & \rho_i^n + \frac{\Delta t}{\Delta x^2 \Lambda(0)} \left(\rho_{i-1}^n - 2\rho_i^n + \rho_{i+1}^n \right) + \frac{\Delta t G \Lambda'(0)}{2\DX (\Lambda(0))^2} \left(\rho_{i+1}^n - \rho_{i-1}^n\right).
\end{align}
This is a finite difference centered discretization of the Keller-Segel equation \eqref{eq:KS}.
\end{proof}

\section{Hyperbolic scaling}\label{sec:hyperbolic}
\subsection{System at hyberbolic scaling}

In this part, we take $\sigma=\lambda_0=1, \tau=\eps$ into \eqref{twovelo}. The system at hyperbolic scaling reads
\begin{equation}
  \label{twovelo1}
  \pa_t p^\pm \pm  \pa_x p^\pm + \frac{1}{\eps} \pa_y((\pm G -y) p^\pm) = \pm \frac{\Lambda(y)}{2}(p^- - p^+).
\end{equation}
Formally, when $\eps\to 0$, we obtain $\pa_y((\pm G -y) p^\pm) = 0$. Then, we deduce that $p^{+}(t,x,y) \to f^+(t,x) \delta_{y= G}$ and $p^{+}(t,x,y) \to f^-(t,x) \delta_{y=- G}$ as $\eps\to 0$.
Then, integrating \eqref{twovelo1} over $y$, we deduce
\begin{equation}\label{eq:kin}
  \pa_t f^\pm \pm \pa_x f^\pm = \pm \frac{1}{2}\big(\Lambda(-G) f^- - \Lambda(G) f^+ \big).
\end{equation}
We recover the two stream kinetic system for chemotaxis.

In order to discretize equation \eqref{twovelo1}, we proceed in two steps :
 \begin{itemize}
 \item In a first step, we discretize the transport equation
   \[
   \pa_t p^\pm + \frac{1}{\eps} \pa_y((\pm G -y) p^\pm) = 0.
   \]
   As above we use an implicit upwind scheme, which reads, for any $n\in\NN$, $i\in\ZZ$ and $k\in\{-K,\ldots,K\}$,
   \begin{subequations}\label{upwhyper}
     \begin{align}\label{2v1}
       & p_{i,k}^{\pm,n+\frac 12}  = p_{i,k}^{\pm,n} - \frac{\DT}{\eps\DY}\Big(J_{i,k+\frac 12}^{\pm,n+\frac 12} - J_{i,k-\frac 12}^{\pm,n+\frac 12}\Big), \\[1mm]
       & J_{i,k+\frac 12}^{\pm,n+\frac 12} = (\pm G - y_k)^+ p_{i,k}^{\pm,n+\frac 12} - (\pm G - y_{k+1})^- p_{i,k+1}^{\pm,n+\frac 12}.\label{2v2}
     \end{align}
   \end{subequations}
   We impose no-flux boundary conditions~: $J_{i,K+\frac 12}^{\pm,n+\frac 12} = J_{i,-K-\frac 12}^{\pm,n+\frac 12} = 0$, for any $i\in\ZZ$.

 \item In the second step, we discretize
   \begin{equation}\label{eqppm}
     \pa_t p^\pm \pm \pa_x p^\pm = \pm \frac{\Lambda(y)}{2} (p^--p^+).
   \end{equation}
   Using a well-balanced scheme, in the spirit of \cite{GT, Numerische}, we get,
   for any $n\in\NN$, $i\in\ZZ$ and $k\in\{-K,\ldots,K\}$,
 \begin{subequations}\label{wbhyper}
   \begin{align}\label{2v3}
     p_{i,k}^{+,n+1}  =\ & p_{i,k}^{+,n+\frac 12} - \frac{\DT}{\DX}(p_{i,k}^{+,n+\frac 12}-p_{i,k}^{-,n+\frac 12}) 
      + \frac{\DT}{\DX(1+\frac{1}{2}\Lambda(y_k)\DX)}(p_{i-1,k}^{+,n+\frac 12}-p_{i,k}^{-,n+\frac 12})   \\
     p_{i-1,k}^{-,n+1}  =\ & p_{i-1,k}^{-,n+\frac 12} + \frac{\DT}{\DX}(p_{i-1,k}^{+,n+\frac 12}-p_{i-1,k}^{-,n+\frac 12}) 
     + \frac{\DT}{\DX(1+\frac{1}{2}\Lambda(y_k)\DX)}(p_{i,k}^{-,n+\frac 12}-p_{i-1,k}^{+,n+\frac 12}). \label{2v4} 
   \end{align}
 \end{subequations}
\end{itemize}

It is well-known that the numerical scheme \eqref{wbhyper} is stable and consistent with \eqref{eqppm} provided a CFL condition is satisfied:
\begin{lemma}
  Let us assume that the following CFL condition
  $
  \DT \leq \DX
  $
  holds and $\Lambda$ is a nonnegative function. Then the scheme \eqref{upwhyper}--\eqref{wbhyper} is positive and conservative.
\end{lemma}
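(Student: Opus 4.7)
The plan is to handle positivity and conservation separately and, within each, to treat the two half-steps \eqref{upwhyper} and \eqref{wbhyper} independently; since the full update is their composition, preservation of both properties by each half-step is enough.

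For positivity of the first half-step \eqref{upwhyper}, the scheme is an implicit upwind discretization of a pure transport equation in $y$, so the linear system has positive diagonal and nonpositive off-diagonal entries, and together with the no-flux boundary conditions is a strictly diagonally dominant $M$-matrix. Its inverse is nonnegative, which gives positivity unconditionally; this is the same argument as in the previous lemma for \eqref{upwdiff}. For positivity of the second half-step \eqref{wbhyper}, I would regroup \eqref{2v3} as
\[
p_{i,k}^{+,n+1} = \Big(1-\frac{\DT}{\DX}\Big) p_{i,k}^{+,n+\frac 12} + \frac{\DT}{\DX(1+\frac{1}{2}\Lambda(y_k)\DX)}\, p_{i-1,k}^{+,n+\frac 12} + \frac{\frac{1}{2}\Lambda(y_k)\DT}{1+\frac{1}{2}\Lambda(y_k)\DX}\, p_{i,k}^{-,n+\frac 12},
\]
and then read off nonnegativity of all three coefficients from $\DT\leq\DX$ and $\Lambda\geq 0$; the ``$-$'' update \eqref{2v4} is handled symmetrically.

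For conservation of the first half-step, \eqref{2v1} is already in flux-difference form, and the no-flux conditions $J_{i,\pm K\pm\frac12}^{\pm,n+\frac12}=0$ make the telescoping sum over $k$ vanish, yielding $\sum_k p_{i,k}^{\pm,n+\frac12}=\sum_k p_{i,k}^{\pm,n}$ for every $i$. For the second half-step, the decisive observation is that if one pairs $p_{i,k}^{+,n+1}$ from \eqref{2v3} with $p_{i-1,k}^{-,n+1}$ from \eqref{2v4}, the two well-balanced source contributions cancel exactly:
\[
p_{i,k}^{+,n+1}+p_{i-1,k}^{-,n+1} = p_{i,k}^{+,n+\frac 12}+p_{i-1,k}^{-,n+\frac 12} -\frac{\DT}{\DX}\big(p_{i,k}^{+,n+\frac 12}-p_{i,k}^{-,n+\frac 12}\big) +\frac{\DT}{\DX}\big(p_{i-1,k}^{+,n+\frac 12}-p_{i-1,k}^{-,n+\frac 12}\big).
\]
The right-hand side is in flux-difference form in $i$, so it telescopes when summed over $i\in\ZZ$, giving $\sum_i(p_{i,k}^{+,n+1}+p_{i,k}^{-,n+1})=\sum_i(p_{i,k}^{+,n+\frac12}+p_{i,k}^{-,n+\frac12})$ after reindexing.

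The only point that requires any real bookkeeping is the regrouping in the positivity argument for \eqref{2v3}. One must notice that the two terms containing $p_{i,k}^{-,n+\frac12}$ combine as $a-b$ with $a=\DT/\DX$ and $b=\DT/(\DX(1+\tfrac12\Lambda(y_k)\DX))$, whose nonnegativity relies on $\Lambda(y_k)\geq 0$; once this is done, the CFL $\DT\leq\DX$ is exactly what makes the self-coefficient $1-\DT/\DX$ nonnegative, and both statements of the lemma follow by inspection.
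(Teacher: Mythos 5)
Your proof is correct and follows essentially the same route as the paper: the positivity of \eqref{wbhyper} is obtained by exactly the regrouping the paper uses (the coefficient of $p_{i,k}^{-,n+\frac12}$ being $\frac{\DT}{\DX}\bigl(1-\frac{1}{1+\frac12\Lambda(y_k)\DX}\bigr)\geq 0$ and the self-coefficient $1-\DT/\DX\geq 0$ under the CFL condition), while \eqref{upwhyper} is positive as an implicit upwind/$M$-matrix scheme. Your explicit verification of conservation for \eqref{wbhyper} via the pairing of \eqref{2v3} with \eqref{2v4} and the exact cancellation of the well-balanced source terms is a welcome detail that the paper leaves implicit, but it is not a different method.
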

\begin{proof}
  Indeed, scheme \eqref{upwhyper} is clearly positive and conservative.
  For \eqref{wbhyper}, it may be rewritten
  \begin{align*}
    p_{i,k}^{+,n+1}  =\ & p_{i,k}^{+,n+\frac 12} \left(1 - \frac{\DT}{\DX}\right) +  \frac{\DT}{\DX} \left(1-  \frac{1}{1+\frac{1}{2}\Lambda(y_k)\DX}\right) p_{i,k}^{-,n+\frac 12} 
                          + \frac{\DT}{\DX(1+\frac{1}{2}\Lambda(y_k)\DX)}p_{i-1,k}^{+,n+\frac 12}    \\
     p_{i-1,k}^{-,n+1}  =\ & p_{i-1,k}^{-,n+\frac 12} \left(1 - \frac{\DT}{\DX}\right) + \frac{\DT}{\DX}\left(1-\frac{1}{1+\frac{1}{2}\Lambda(y_k)\DX}\right) p_{i-1,k}^{-,n+\frac 12} 
     + \frac{\DT}{\DX(1+\frac{1}{2}\Lambda(y_k)\DX)} p_{i,k}^{-,n+\frac 12}.
  \end{align*}
  Then, since $\Lambda$ is nonnegative, it is clear that all coefficients are positive under the CFL condition.
\end{proof}

\subsection{Asymptotic limit}

 We verify the behaviour of the scheme when $\eps\to 0$.

 \begin{lemma}\label{lem:support}
   Let the sequence $(p_{i,k}^{\pm,n+\frac 12})_{i,k,n}$ be computed thanks to scheme \eqref{upwhyper}. When $\eps\to 0$, we have $p_{i,k}^{\pm,n+\frac 12} \to \bar{p}_{i}^{\pm,n} \delta_{k=\pm K}$, where $\displaystyle \bar{p}_i^{\pm,n} = \sum_{k=-K}^K p_{i,k}^{\pm,n}$.  \\
   Moreover, the sequence $(\bar{p}_i^{\pm,n})_{i,k,n}$ satisfies the numerical scheme
\begin{subequations}\label{eq:plimkin}
\begin{align}
   \bar{p}_{i}^{+,n+1} =\ & \bar{p}_{i}^{+,n} - \frac{\DT}{\DX}(\bar{p}_{i}^{+,n}-\bar{p}_{i}^{-,n}) + \frac{\DT}{\DX(1+\frac 12 \Lambda(G)\DX)}\bar{p}_{i-1}^{+,n}  
 - \frac{\DT}{\DX(1+\frac 12 \Lambda(-G)\DX)}\bar{p}_{i}^{-,n},  \label{eq:plimkina} 
\end{align}
\begin{align}
\bar{p}_{i-1}^{-,n+1} =\ & \bar{p}_{i-1}^{+,n} + \frac{\DT}{\DX}(\bar{p}_{i-1}^{+,n}-\bar{p}_{i-1}^{-,n}) + \frac{\DT}{\DX(1+ \frac 12 \Lambda(-G)\DX)} \bar{p}_{i}^{-,n} 
    - \frac{\DT}{\DX(1+ \frac 12 \Lambda(G)\DX)}\bar{p}_{i-1}^{+,n}.   \label{eq:plimkinb}
\end{align}
\end{subequations}
This is a consistent discretization of the kinetic equation \eqref{eq:kin}.
 \end{lemma}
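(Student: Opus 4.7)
The plan is to mirror the two-step structure of the scheme itself: first analyze the stiff projection step \eqref{upwhyper} in the limit $\eps\to 0$ to prove the concentration onto $k=\pm K$, then insert this concentrated data into the well-balanced step \eqref{wbhyper} and sum over $k$ to obtain \eqref{eq:plimkin}, and finally Taylor-expand to recover \eqref{eq:kin}.

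For the concentration claim, I would multiply \eqref{2v1} by $\eps\DY/\DT$ and pass to the limit $\eps\to 0$ to obtain $J_{i,k+\frac12}^{\pm,n+\frac12}=J_{i,k-\frac12}^{\pm,n+\frac12}$ for every interior $k$. Combined with the no-flux boundary conditions, a telescoping argument yields $J_{i,k+\frac12}^{\pm,n+\frac12}=0$ for all $k$. Now I would unpack definition \eqref{2v2} using the sign structure of $\pm G-y_k$ on the grid $y_k\in[-|G|,|G|]$. For $p^+$ the coefficient $(G-y_k)^+$ vanishes only at $k=K$, while $(G-y_{k+1})^-$ vanishes for every $k+1\le K$; propagating the identity $(G-y_k)^+ p_{i,k}^{+,n+\frac12}=(G-y_{k+1})^- p_{i,k+1}^{+,n+\frac12}$ from $k=-K$ upwards forces $p_{i,k}^{+,n+\frac12}=0$ for $k<K$. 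A symmetric argument from $k=K$ downwards handles $p^-$, forcing it to vanish except at $k=-K$. Conservation of the step \eqref{upwhyper} (the total in $k$ is preserved because the fluxes telescope with no-flux boundary) identifies the surviving coefficient with $\bar p_i^{\pm,n}$.

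Next I would substitute $p_{i,k}^{+,n+\frac12}=\bar p_i^{+,n}\,\delta_{k=K}$ and $p_{i,k}^{-,n+\frac12}=\bar p_i^{-,n}\,\delta_{k=-K}$ into \eqref{2v3}--\eqref{2v4} and then sum over $k\in\{-K,\dots,K\}$. The pointwise first two terms in each equation collapse onto $\bar p_i^{\pm,n}$, and the relaxation term $\frac{\DT}{\DX(1+\frac12\Lambda(y_k)\DX)}(p_{i-1,k}^{+,n+\frac12}-p_{i,k}^{-,n+\frac12})$ contributes only at $k=K$ (via the first summand, weighted by $\Lambda(G)$) and at $k=-K$ (via the second summand, weighted by $\Lambda(-G)$). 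This produces exactly \eqref{eq:plimkina}--\eqref{eq:plimkinb}. The same computation applied to $p^-$ (after a shift in $i$) gives the second equation.

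For consistency I would expand the Harten-style weights $\bigl(1+\tfrac12\Lambda(\pm G)\DX\bigr)^{-1}=1-\tfrac12\Lambda(\pm G)\DX+O(\DX^2)$, regroup \eqref{eq:plimkina} as an upwind transport part plus an $O(1)$ reaction part, and read off $\pa_t\bar p^+\!+\!\pa_x\bar p^+=\tfrac12(\Lambda(-G)\bar p^- -\Lambda(G)\bar p^+)$ at leading order; the analogous computation on \eqref{eq:plimkinb} yields the equation for $\bar p^-$. The delicate step is the first one: the sign analysis is elementary but has to be carried out carefully on both endpoints $y=\pm G$, where one of the one-sided parts is zero, to rule out spurious equilibria in the interior and to identify the correct concentration point for each sign of velocity. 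Everything after this reduces to bookkeeping.
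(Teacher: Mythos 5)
Your proposal follows the paper's own proof essentially step for step: pass to the limit in \eqref{2v1} to get constant, hence vanishing, fluxes, use the sign structure of $(\pm G-y_k)^\pm$ to force concentration at $k=\pm K$, identify the mass via the telescoping/conservation property, sum \eqref{2v3}--\eqref{2v4} over $k$ against the discrete Dirac masses, and expand $\bigl(1+\tfrac12\Lambda(\pm G)\DX\bigr)^{-1}$ for consistency. The only (inessential) difference is that you implicitly work with $G>0$ while the paper also records the mirror case $G<0$, where the concentration indices swap.
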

 \begin{proof}
   Letting $\eps\to 0$ in \eqref{2v1}, we deduce that at the limit we have, for any $k\in\{-K,\ldots,K\}$, $J_{i,k+\frac 12}^{\pm,n+\frac 12} = J_{i,k-\frac 12}^{\pm,n+\frac 12} = 0$, where the latter equality is a consequence of the no-flux boundary condition.
     Then, for any $i,k,n$, the limit satisfies
     \[
     (\pm G - y_k)^+ p_{i,k}^{\pm,n+\frac 12} = (\pm G - y_{k+1})^- p_{i,k+1}^{\pm,n+\frac 12}.
     \]
     If $G>0$, we choose $k+1 = K$ and deduce that $0=p_{i,K-1}^{+,n+\frac 12}$. By induction $0 = p_{i,k}^{+,n+\frac 12}$ for any $k\leq K-1$.
     For $k=-K$, we deduce that $0 = p_{i,-K+1}^{-,n+\frac 12}$, by induction, $0 = p_{i,k}^{-,n+\frac 12}$ for any $k\geq -K$.

     If $G<0$, by taking $k+1 = K$, we get $0=p_{i,K-1}^{-,n+\frac 12}$. By induction, we deduce $0 = p_{i,k}^{-,n+\frac 12}$ for any $k\in\{-K,\ldots,K\}$.
     For $k=-K$, we get $0 = p_{i,-K+1}^{+,n+\frac 12}$. Hence, $0 = p_{i,k}^{+,n+\frac 12}$ for any $k\in\{-K,\ldots,K\}$.
     
     Moreover, summing over $k$ the equation \eqref{2v1}, we deduce that $\sum_{k=-K}^K p_{i,k}^{\pm,n+\frac 12} = \sum_{k=-K}^K p_{i,k}^{\pm,n}$.

 Hence, summing \eqref{2v3} and \eqref{2v4} over $k$, we obtain the scheme \eqref{eq:plimkin}. 
 In order to verify the consistency with the kinetic equation \eqref{eq:kin}, we recall the expansion
\[
\frac{1}{1+ \frac 12 \Lambda(\pm G)\DX} = 1 - \frac 12 \Lambda(\pm G)\DX + o(\DX).
\]
Injecting this expression into \eqref{eq:plimkina}, we obtain 
\begin{align*}
  \bar{p}_i^{+,n+1} =\  & \bar{p}_i^{+,n} - \frac{\DT}{\DX}\left(\bar{p}_i^{+,n} - \bar{p}_{i-1}^{+,n} + \frac 12 \Lambda(G) \DX \bar{p}_{i-1}^{+,n} - \frac 12 \Lambda(-G) \DX \bar{p}_{i}^{-,n} + o(\DX)\right)  \\
=\ & \bar{p}_i^{+,n} - \frac{\DT}{\DX} (\bar{p}_i^{+,n} - \bar{p}_{i-1}^{+,n}) - \frac{\DT}{2} \big(\Lambda(G) \bar{p}_{i-1}^{+,n} -\Lambda(-G) \bar{p}_{i}^{-,n}\big) + o(\DT).
\end{align*}
This is consistent with the equation for $f^+$ in \eqref{eq:kin}.
We proceed in the same way with \eqref{eq:plimkinb}.
\end{proof}

\subsection{A remark on the extension of this latter scheme to the diffusive regime}
\label{subsec:wrong}

In this subsection, we underline the importance of a careful use of the splitting approach to recover the good asymptotic limit. Indeed, a natural extension of the scheme used in previous section to the diffusive regime will not provide a consistent scheme at the diffusive limit.
In order to justify this affirmation, let us consider the following numerical scheme:
 \begin{itemize}
 \item We use the same upwind scheme as above for a first time step \eqref{2v1}--\eqref{2v2}.
   
 \item In the second step, we discretize by a uniform scheme the equation
   \[
   \eps^2\pa_t p^\pm \pm \eps \pa_x p^\pm = \pm \frac{\Lambda(y)}{2} (p^--p^+).
   \]
   Using an asymptotic preserving well-balanced scheme, in the spirit of \cite{GT, Numerische}, we get
   \begin{align}\label{2v5}
     p_{i,k}^{+,n+1} =\ & p_{i,k}^{+,n+\frac 12} - \frac{\DT}{\eps\DX}(p_{i,k}^{+,n+1}-p_{i,k}^{-,n+1}) \\
     & + \frac{\DT}{\DX(\eps+\frac{1}{2}\Lambda(y_k)\DX)}(p_{i-1,k}^{+,n+\frac 12}-p_{i,k}^{-,n+\frac 12})  \nonumber \\
     p_{i-1,k}^{-,n+1} =\ &  p_{i-1,k}^{-,n+\frac 12} + \frac{\DT}{\eps\DX}(p_{i-1,k}^{+,n+1}-p_{i-1,k}^{-,n+1}) \label{2v6}  \\
     & + \frac{\DT}{\DX(\eps+\frac{1}{2}\Lambda(y_k)\DX)}(p_{i,k}^{-,n+\frac 12}-p_{i-1,k}^{+,n+\frac 12}). \nonumber
   \end{align}

   Since this scheme is uniformly accurate with respect to $\eps$, we may pass to the limit $\eps\to 0$. 
   After straightforward computations, we get from \eqref{2v5}--\eqref{2v6}
   \begin{align*}
     & p_{i,k}^{+,n+1} = \frac{\eps \DX + \DT}{\eps \DX + 2\DT} A_{i-\frac 12,k}^{n+\frac 12} + \frac{\DT}{\eps \DX + 2\DT} B_{i+\frac 12,k}^{n+\frac 12} \\[1mm]
     & p_{i,k}^{-,n+1} = \frac{\DT}{\eps \DX + 2\DT} A_{i-\frac 12,k}^{n+\frac 12} + \frac{\eps \DX + \DT}{\eps \DX + 2\DT} B_{i+\frac 12,k}^{n+\frac 12} \\[1mm]
     & A_{i-\frac 12,k}^{n+\frac 12} = p_{i,k}^{+,n+\frac 12} + \frac{\DT}{\DX(\eps + \frac 12 \Lambda(y_k)\DX)} (p_{i-1,k}^{+,n+\frac 12} - p_{i,k}^{-,n+\frac 12}) \\[1mm]
     & B_{i+\frac 12,k}^{n+\frac 12} = p_{i,k}^{-,n+\frac 12} + \frac{\DT}{\DX(\eps + \frac 12 \Lambda(y_k)\DX)} (p_{i+1,k}^{-,n+\frac 12} - p_{i,k}^{+,n+\frac 12}).
   \end{align*}
   As a first observation, we notice that when $\eps\to 0$, we have
   \[
   \lim_{\eps\to 0} p_{i,k}^{+,n+1} = \lim_{\eps\to 0} \frac 12(A_{i-\frac 12,k}^{n+\frac 12}+B_{i+\frac 12,k}^{n+\frac 12}) =
   \lim_{\eps\to 0} p_{i,k}^{-,n+1}.
   \]
   We deduce that for any $n\in\NN^*$, we have $\lim_{\eps\to 0} p_{i,k}^{+,n} = \lim_{\eps\to 0} p_{i,k}^{-,n}$.
   
   Moreover, by adding the first two equalities in the above numerical scheme, we get
   \[
   p_{i,k}^{+,n+1} + p_{i,k}^{-,n+1} = A_{i-\frac 12,k}^{n+\frac 12} + B_{i+\frac 12,k}^{n+\frac 12}
   \]
   Let us define the macroscopic density
   $\rho_i^n := \sum_{k=-K}^K (p_{i,k}^{+,n} + p_{i,k}^{-,n})$. Then, we have clearly
   \[
   \rho_i^{n+1} = \sum_{k=-K}^K \Big(A_{i-\frac 12,k}^{n+\frac 12} + B_{i+\frac 12,k}^{n+\frac 12}\Big).
   \]
   We pass to the limit $\eps\to 0$ in the latter numerical scheme. We recall that from Lemma \ref{lem:support} (ii), we have $p_{i,k}^{\pm,n+\frac 12} \to \bar{p}_{i}^{\pm,n} \delta_{k=\pm K}$ where $\bar{p}_i^{\pm,n} = \sum_{k=-K}^K p_{i,k}^{\pm,n}$.
   Hence, passing into the limit, we obtain
   \begin{align*}
     & A_{i-\frac 12,k}^{n+\frac 12} \to  \Big(\bar{p}_{i}^{+,n} + \frac{2\DT}{\Lambda(G)\DX^2} \bar{p}_{i-1}^{+,n}\Big) \delta_{k=K} - \frac{2\DT}{\Lambda(-G)\DX^2} \bar{p}_{i}^{-,n} \delta_{k=-K}  \\
     & B_{i+\frac 12,k}^{n+\frac 12} \to  \Big(\bar{p}_{i}^{-,n} + \frac{2\DT}{\Lambda(-G)\DX^2} \bar{p}_{i+1}^{-,n}\Big) \delta_{k=-K} - \frac{2\DT}{\Lambda(G)\DX^2} \bar{p}_{i}^{+,n} \delta_{k=K}.
   \end{align*}
   Summing over $k$, and denoting $\bar{\rho}_i^n = \lim_{\eps\to 0} \rho_i^n$, we arrive to the limiting scheme
   \[
   \bar{\rho}_i^{n+1} = \bar{\rho}_i^{n} + \frac{2\DT}{\DX^2}\left(\frac{1}{\Lambda(G)}(\bar{p}^{+,n}_{i-1} - \bar{p}_i^{+,n}) + \frac{1}{\Lambda(-G)}(\bar{p}_{i+1}^{-,n}-\bar{p}_{i}^{-,n})\right).
   \]
   With the fact that $\bar{p}^{+,n}_{i} = \bar{p}^{-,n}_{i} = \frac 12 \bar{\rho}_i^n$, we get
   \[
   \bar{\rho}_i^{n+1} = \bar{\rho}_i^{n} + \frac{\DT}{\DX^2}\left(\frac{1}{\Lambda(G)}(\bar{\rho}^{n}_{i-1} - \bar{\rho}_i^{n}) + \frac{1}{\Lambda(-G)}(\bar{\rho}_{i+1}^{n}-\bar{\rho}_{i}^{n})\right).
   \]
   This latter scheme is obviously not consistent with the Keller-Segel equation \eqref{eq:KS}.

 \end{itemize}

\section{Numerical results}\label{Sec_Num}

\subsection{Problem}\label{problem}

In order to illustrate our theoretical results, we carry out numerical computations for the following problem.
Let us consider the one dimensional space $x\in[0,1]$ with mirror boundary conditions,
\begin{equation*}
p^+(t,x=0,y)=p^-(t,x=0,y),\qquad p^-(t,x=1,y)=p^+(t,x=1,y).
\end{equation*}
These boundary conditions impose 
that the net mass flux at the boundary is zero; i.e.,
\[
J=\int_\mathbb{R} (p^+-p^-)dy=0, \qquad \mathrm{at}\quad x=0,1.
\]
As the initial condition, we consider
\begin{equation*}
p^\pm(t=0,x,y)=\left\{
\begin{array}{ll}
\frac 1 G, \quad & |y|\le \frac{|G|}{2},\\[2mm]
0, & |y|>\frac{|G|}{2},
\end{array}
\right.
\end{equation*}
for any $x\in[0,1]$.
Hereafter we only consider the case where $G> 0$.

For this problem, the steady state solutions in the diffusive and hyperbolic limits are explicitly calculated as below~:

\begin{description}
\item[Steady state in the diffusive limit.]
  
Let us consider the Keller-Segel system \eqref{eq:KS}, which is obtained in the continuum limit at the diffusive scaling.
We first recall that this equation is conservative, implying that $\int_0^L \bar{p}_0(t,x)\,dx = M$ for all positive time $t$.
The steady states verify
  \[
  \pa_x \left(\frac{1}{\Lambda(0)} \pa_x \bar{p}_0 + G\bar{p}_0 \frac{\Lambda'(0)}{\Lambda(0)^2}\right) = 0.
  \]
 Since the flux in the diffusive scaling is given by (\ref{eq:p1}), the no flux boundary conditions give
  \[
  \pa_x \bar{p}_0 + G\bar{p}_0 \frac{\Lambda'(0)}{\Lambda(0)} = 0.
  \]
  By integrating the above equation and using the mass conservation, we obtain
  \begin{equation}\label{ks_steady}
  \bar{p}_0(x) = \frac{G\Lambda'(0) M}{\Lambda(0)\Big(1-e^{-\frac{G\Lambda'(0)}{\Lambda(0)}L}\Big)} e^{-\frac{G\Lambda'(0)}{\Lambda(0)}x}.
  \end{equation}

\item[Steady state in the hyperbolic limit.]
  
  For the two-stream kinetic equation in the hyperbolic limit (\ref{eq:kin}), the steady state can be explicitly computed.
  Indeed, it should satisfy~:
  \begin{equation}\label{eq:kinlim}
    \pa_x f^\pm = \frac 12 (\Lambda(-G) f^- - \Lambda(G) f^+),
  \end{equation}
  complemented with no-flux boundary conditions.
  By subtracting the above equations, we obtain
  $\pa_x (f^+-f^-) = 0$. Thus $f^+ - f^-$ is constant, which is $0$ thanks to the no-flux boundary conditions. Therefore, $f^+ = f^- = f$, injecting into \eqref{eq:kinlim}, it gives
  \[
  \pa_x f = \frac 12 (\Lambda(-G) -\Lambda(G)) f.
  \]
  Integrating,
  \[
  f(x) = A e^{ \frac 12 (\Lambda(-G) -\Lambda(G)) x}.
  \]
  The constant $A$ is computed thanks to the conservation of the mass~: $M=\int_0^L f(x)\,dx$. Finally, we have
  \begin{equation}\label{analytic_hyplimit}
  f(x) = \frac{(\Lambda(-G)-\Lambda(G))M}{2(e^{\frac 12(\Lambda(-G)-\Lambda(G))L}-1)} e^{\frac 12(\Lambda(-G)-\Lambda(G))x}.
  \end{equation}

\end{description}

\subsection{Numerical results for scheme (\ref{upwdiff})--(\ref{eq:pdiff})}
\label{sec_numdiff}

%
The asymptotic preserving (AP-diff) scheme (\ref{upwdiff})--(\ref{eq:pdiff}) is implemented for the problem in Section \ref{problem}.
Although this scheme is implicit, it can be implemented very efficiently.
The computational procedure is described in Appendix \ref{app_ap}.
We carry out the numerical computations for various values of $\lambda_0$ and illustrate the validity of the AP-diff scheme by comparing the numerical results to those obtained by a MC method, which is explained in Appendix \ref{app_mc}, and the analytical solution (\ref{ks_steady}).
We also investigate the accuracy of the AP-diff scheme by comparing the numerical results for different mesh systems.

For the response function $\Lambda(y)$, we consider
\begin{equation}\label{eq_Lambda}
\Lambda(y)=1-\chi\arctan(y),
\end{equation}
where $\chi$ is the modulation amplitude.
In the following computations, unless otherwise stated, the parameters $G=1$ and $\chi=0.5$ are fixed.

\begin{figure}[htbp]
\centering
\includegraphics[width=1.0\textwidth]{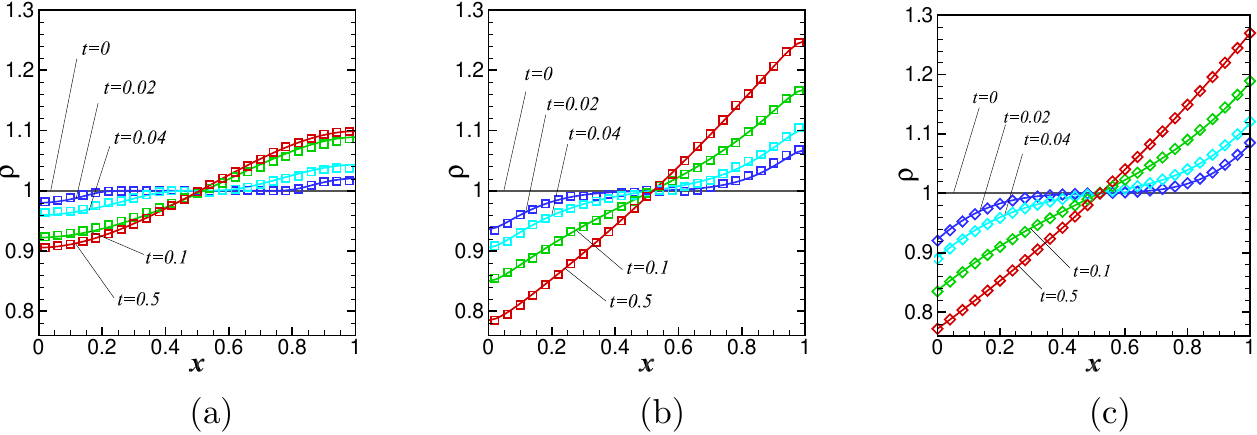}
\caption{
Time dynamics of $\rho$ for different values of $\lambda_0$; i.e., $\lambda_0=10$ in \rm{(a)}, $\lambda_0=10^3$ in \rm{(b)}, and $\lambda_0=10^8$ in \rm{(c)}.
The solid lines show the results obtained by the AP-diff scheme (\ref{upwdiff})-- (\ref{eq:pdiff}).
The squared symbols show the results obtained by the MC method in \rm{(a)} and \rm{(b)} and those obtained by the finite-difference scheme of KS equation (\ref{eq:fdks}) in \rm{(c)}.
}\label{fig:tevol}
\end{figure}

Figure \ref{fig:tevol} shows the time dynamics of population density $\rho$ for different values of $\lambda_0$.
In Figure \ref{fig:tevol}(a) and \ref{fig:tevol}(b), the results of the AP-diff scheme are compared with those obtained by the MC method.

Although the AP-diff scheme and MC method are different types of numerical methods, both methods provide consistent numerical results.
The distributions in the internal state $y$ obtained by the AP-diff and MC methods are also compared in Figure \ref{fig:dist}.
It is seen that even for the distribution functions both results coincide with each other.
We also observe that the $y$-profile becomes narrower and more symmetric as $\lambda_0$ increases.
This observation is consistent with the asymptotic behavior for small $\eps=\lambda_0^{-1}$, where we know that $p^\pm$ concentrate at $y=0$ in the continuum limit, i.e. $p^\pm(t,x,y) \rightarrow p_0(t,x)\delta_{y=0}$ as $\eps\rightarrow 0$ (see Section \ref{sec:diffscal}).
\begin{figure}[htbp]
\centering
\includegraphics[width=0.5\textwidth]{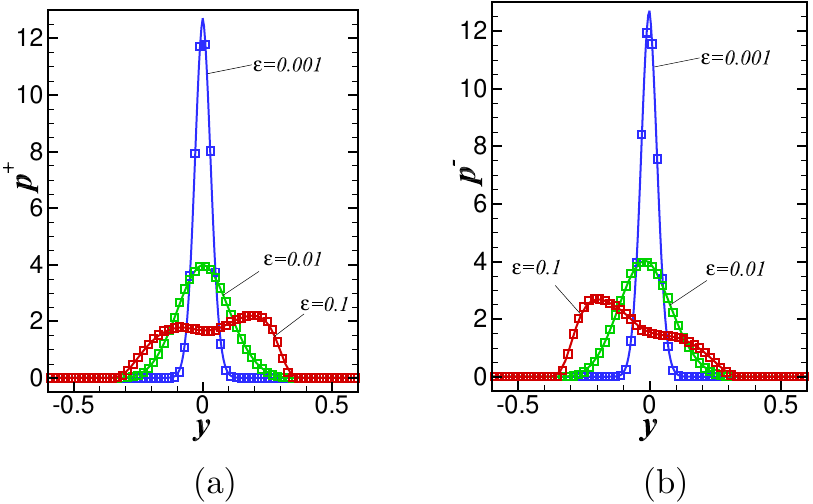}
\caption{
Figures \rm{(a)} and \rm{(b)} show $p^+(y)$ and $p^-(y)$, respectively, for different values of $\eps=\lambda_0^{-1}$.
The solid lines show the results obtained by the AP-diff scheme while the plots with squared symbols show the results obtained by the MC method.
}\label{fig:dist}
\end{figure}

However, when $\eps$ is too small, MC method may not be implemented efficiently.
Then in Figure \ref{fig:tevol}(c), the result of the AP-diff scheme for $\lambda_0=10^8$ is compared with the KS equation (\ref{eq:fdks}), which is the continuum limit equation when $\eps\to 0$ of the two stream kinetic model at diffusive scaling.
Remarkably, this comparison illustrates the asymptotic preserving nature of the AP-diff scheme.

Furthermore, Table \ref{table:acc_diff} shows the numerical accuracy of the AP-diff scheme for different values of $\lambda_0$.
It is seen that the AP-diff scheme is uniformly accurate and efficient irrespective of the parameter value of $\lambda_0$.
Even for the small mesh system $I=50$, the maximum relative error of the population density $\rho$, i.e., the error in the norm $L^\infty$ divided by the local population density, is estimated less than $0.1 \%$.
\begin{table}[htbp]
\caption{
Numerical accuracy of the AP-diff scheme for different values of $\lambda_0$.
The maximum relative errors, i.e. the error in the norm $L^\infty$ divided by the local population density, between the steady-state profiles of $\rho$ obtained for two different mesh systems ($I$,$I'$) are shown.
The mesh size is set as $\Delta x$=$\Delta y$=$1/I$ and the time-step size is set as $\Delta t=0.1\Delta x^2$.
Read as 4.9e-4=4.9$\times 10^{-4}$
}\label{table:acc_diff}
\centering
\begin{tabular}{c cccc}
\hline\hline
	Mesh ($I$,$I'$)& $\lambda_0=10$&$\lambda_0=10^2$&$\lambda_0=10^3$&$\lambda_0=10^{8}$\\
	\hline
	(50,200)& 4.7e-4  & 6.7e-4 &5.5e-4 &4.1e-5 \\
	(100,200)& 1.7e-4 & 2.4e-4 &1.9e-4 &8.1e-6 \\
	\hline\hline
\end{tabular}
\end{table}

\subsection{Numerical results for the second scheme (\ref{upwhyper})--(\ref{wbhyper})}

In this Section, we consider the scheme (\ref{upwhyper})--(\ref{wbhyper}), denoted AP-hyp scheme in the following, for the problem in Section \ref{problem} at the hyperbolic scaling \eqref{twovelo1}.
We carry out numerical computations for various values of $\tau$ and $\lambda_0$ and compare the results with those obtained by the MC method and the analytical solution in the limit $\tau\rightarrow 0$, (\ref{analytic_hyplimit}).

Figure \ref{fig:tevol_hyp} displays the time dynamics of population density $\rho$ for different values of $\tau$ when $\lambda_0=10$ is fixed.
In each figure, the results obtained by the AP-hyp scheme are compared with those obtained by the MC method.
It is clearly seen that the both methods can provide consistent results.
Furthermore, in figure (c), the steady-state profiles obtained by the AP-hyp and MC schemes are compared with the analytical solution in the asymptotic limit $\tau\to 0$ \eqref{analytic_hyplimit}.
This comparison illustrate the validity of the AP-hyp scheme when $\tau\to 0$.

The distributions in the internal state $y$ obtained by AP-hyp and MC methods are compared in Figure \ref{fig:dist_hyp}.
It is seen that both methods can provide consistent results.
It is also seen that the profile is completely different from that obtained at the diffusive scaling (see Fig. \ref{fig:dist}).
As described in Section \ref{sec:hyperbolic}, in the hyperbolic scaling, the distribution functions $p^\pm$ concentrates at $y=\pm G$, respectively, when $\tau\rightarrow 0$.
Figure \ref{fig:dist_hyp} illustrates that the AP-hyp scheme reproduces this asymptotic behavior.

\begin{figure}[htbp]
\centering
\includegraphics[width=1.0\textwidth]{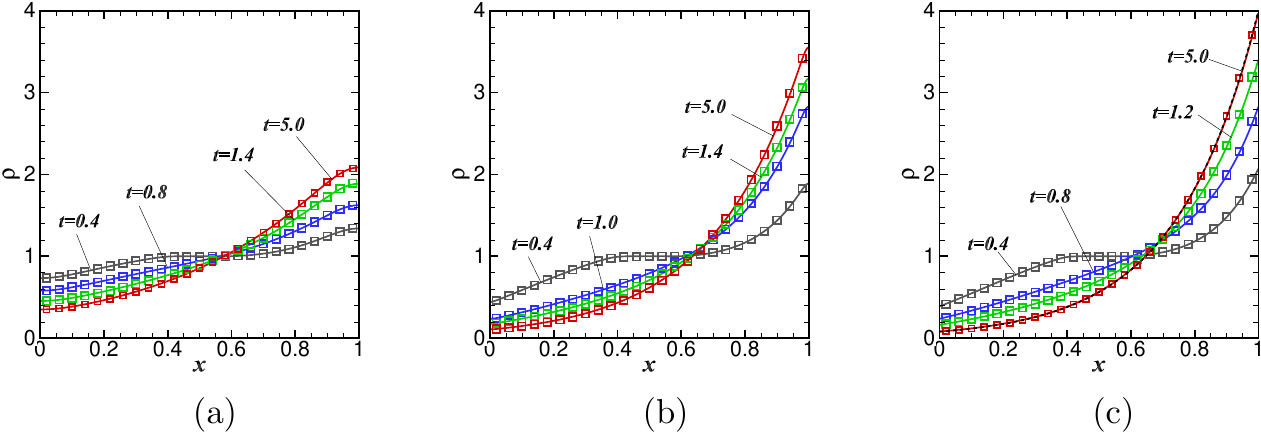}	
\caption{
Time dynamics of $\rho$ for different adaptation time $\tau$; i.e., (a) $\tau$=0.1, (b) $\tau$=0.01 , and (c) $\tau=10^{-8}$.
The parameter $\lambda_0=10.0$ is fixed. 
The solid lines show the results obtained by the AP-hyp scheme while the squared symbols show those obtained by the MC method.
In figure (c), the dashed line shows the analytical result in the limit $\tau\rightarrow 0$ calculated by Eq. (\ref{analytic_hyplimit}).
}\label{fig:tevol_hyp}
\end{figure}
\begin{figure}[htbp]
\centering
\includegraphics[width=0.5\textwidth]{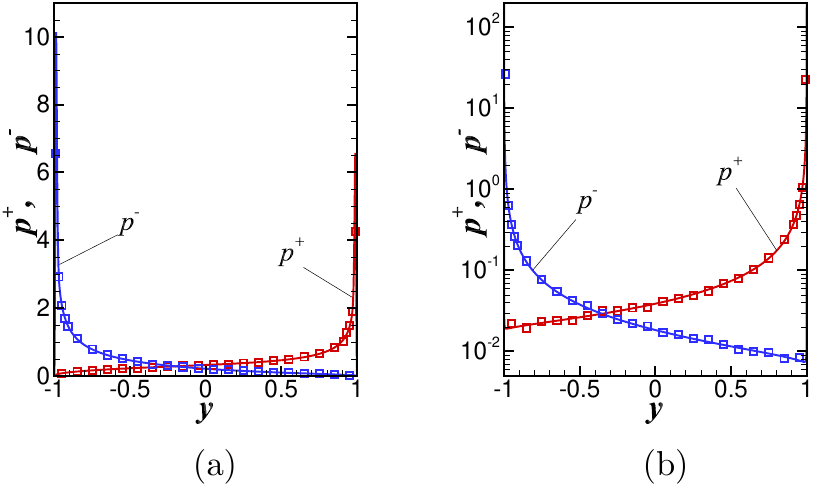}
\caption{
Figures \rm{(a)} and \rm{(b)} show the results for $\tau$=0.1 and $\tau$=0.01, respectively.
The parameters $\lambda_0=10$ is fixed.
The solid lines show the results of the AP-hyp scheme while the symbols show the results of the MC method.
}\label{fig:dist_hyp}
\end{figure}

Table \ref{table:acc_hyp} shows the numerical accuracy of the AP-hyp scheme.
Compared with Table \ref{table:acc_diff}, it is seen that the AP-hyp scheme is less accurate than the AP-diff scheme for $\tau=1$.
However, the AP-hyp scheme keeps the accuracy even for a very small $\tau$ at $\lambda_0=10$.

When $\lambda_0$ is large, the accuracy of the AP-hyp scheme significantly falls at small values of $\tau$.
This is because the spatial profile of $\rho$ becomes exponential for small values of $\tau$ when $\lambda_0$ is large.
Indeed, the analytical solution for $\tau\rightarrow 0$ (\ref{analytic_hyplimit}) indicates that the mass is concentrated at $x=1$ when $\lambda_0$ is large.
Figure \ref{fig:compari_tau} illustrates that the spatial profile of $\rho$ becomes exponential when $\tau$ is small at $\lambda_0=100$.
The growth rate is inversely proportional to $\tau$.

\begin{table}[htbp]
\caption{
Numerical accuracy of the AP-hyp scheme (\ref{upwhyper}) and (\ref{wbhyper}) for different values of $\tau$ at $\lambda_0$=10 and 100.
The maximum relative errors, i.e. the error in the norm $L^\infty$ divided by the local population density, between the spatial profiles of $\rho$ obtained in two different mesh systems ($I$,$I'$) are shown.
The mesh size $\Delta y$ is set as $\Delta y=\Delta x$.
See also the caption in Table \ref{table:acc_diff}.
}\label{table:acc_hyp}
\centering
\begin{tabular}{c cccc cccc}
\hline\hline
Mesh &\multicolumn{4}{c}{$\lambda_0=10$}&&
\multicolumn{3}{c}{$\lambda_0=100$}\\
\cline{2-5}\cline{7-9}
	($I$,$I'$)&
	$\tau$=1.0 &$\tau$=0.1 & $\tau$=0.01 &$\tau$=$10^{-8}$ 
	&&$\tau=1.0$ &$\tau=0.1$ &$\tau$=0.02\\
	\hline
	(200,800) & 3.1e-3 & 2.0e-2& 5.9e-2 & 5.6e-2 
	         && 1.0e-2 & 1.0e-1& --\\
	(400,800) & 1.0e-3 & 6.7e-3& 1.9e-2 & 1.9e-2 
	         && 3.5e-3 & 3.4e-2& 3.3e-1\\
	(800,1600)& --      & --     & --      & --
	         && --      & --     & 1.6e-1\\
	\hline\hline
\end{tabular}
\end{table}
\begin{figure}[htbp]
\centering
\includegraphics[width=0.25\textwidth]{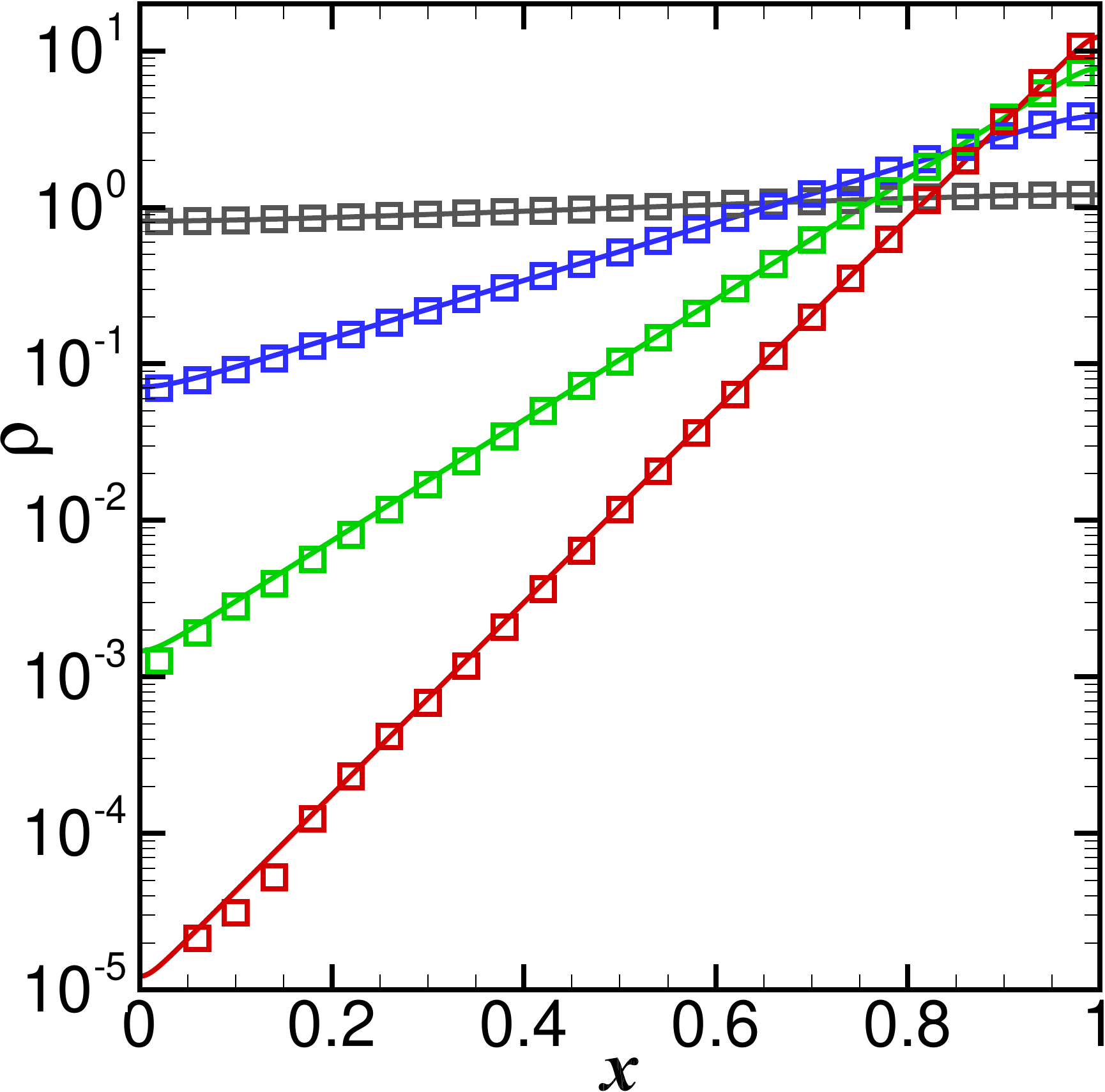}
\caption{
Spatial profiles of population density $\rho$ for different values of $\tau$.
The parameter $\lambda_0=100$ is fixed.
The solid lines show the results of the AP-hyp scheme and the symbols show the results of the MC method.
}\label{fig:compari_tau}
\end{figure}

\subsection{Comparison of two schemes}

\begin{table}[htbp]
\caption{
Numerical accuracy of the AP-diff scheme for different values of $\tau$ at $\lambda_0$=10 and 100.
The maximum relative errors between the spatial profiles of $\rho$ obtained in two different mesh systems ($I$,$I'$) are shown.
The mesh size $\Delta y$ is set as $\Delta y=\frac{|G|}{\tau}\Delta x$.
See also the caption in Table \ref{table:acc_diff}.
}\label{table:acc_ap2}
\centering
\begin{tabular}{c cc c cc}
\hline\hline
Mesh &\multicolumn{2}{c}{$\lambda_0=10$}&&
\multicolumn{2}{c}{$\lambda_0=100$}\\
\cline{2-3}\cline{5-6}
	($I$,$I'$)&
	$\tau=0.1$ &$\tau=0.01$
	&&$\tau=0.1$ &$\tau=0.02$\\
	\hline
	(200,800) & 8.9e-3 & -- 
			 && 2.8e-2 & --\\
	(400,800) & 3.0e-3 & 6.8e-2 
	         && 9.4e-3 & 1.7e-1 \\
	(800,1600)& --      & 3.5e-3   
	         && --      & 8.6e-2   \\ 
	\hline\hline
\end{tabular}
\end{table}
\begin{figure}
\centering
\includegraphics[width=0.5\textwidth]{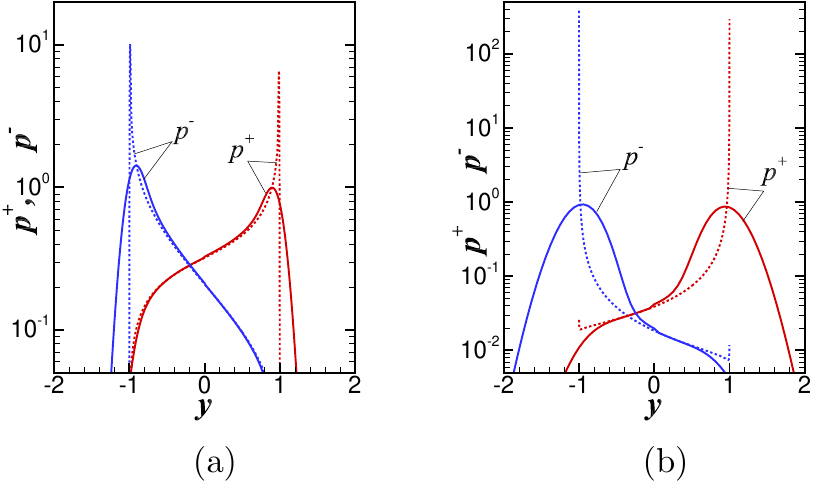}
\caption{
Comparison of the $y$ distribution obtained by the AP-diff scheme (solid lines) and that obtained by the hyperbolic scheme (dashed lines).
Figure (a) and (b) show the results for $\tau=0.1$ and $\tau=0.01$, respectively, at $\lambda_0=10$.
}\label{fig:dist_hyp_diff}
\end{figure}

In Section \ref{sec_numdiff} we have implemented the AP-diff scheme for the two-stream kinetic system at diffusive scaling, where the parameter $\tau=1$ is fixed.
In this Section, in order to treat the variation of parameter $\tau$ explicitely, we slightly modify the AP-diff scheme as following.
We replace $\Delta y$ with $\tau \Delta y$ in (\ref{upwdiff}) and $G$ with $G/\tau$ in (\ref{eq:pdiff}) and (\ref{eq:pdiff2}).
The relation between $x$- and $y$- mesh intervals is rewritten as $\Delta y=\frac{|G|}{\tau}\Delta x$ while the CFL condition (\ref{CFL}) remains unchanged.
Thus, when the parameter $\tau$ is small, the mesh size in $y$ space, $\Delta y$ becomes more coarsened than that in $x$ space.

Although the diffusive scaling is not appropriate unless the parameter $\tau$ is sufficiently larger than $\lambda_0^{-1}$, we implement the modified AP-diff scheme for some small values of $\tau$ and investigate the applicability of the AP-diff scheme for small $\tau$ by comparing the results with those obtained by the AP-hyp scheme (\ref{upwhyper}) and (\ref{wbhyper}).

It should be noted that the AP-diff scheme is conservative when the condition $p^\pm_{i,K}=p^\pm_{i,-K}=0$ for any $i\in\{0,\ldots,I\}$ is satisfied (see Appendix \ref{mass_cons}).
Indeed, this condition is satisfied at the diffusive scaling since the internal state $y$ is concentrated at $y=0$ (See Fig. \ref{fig:dist}).
However, as it is observed in Fig. \ref{fig:dist_hyp}, the internal state $y$ is biased at $y=\pm G$ when $\tau$ is small.
Thus, when we implement the modified AP-diff scheme for a small value of $\tau$, we need to extend the domain of $y$ in order to satisfy the mass conservation.
In the following computations, we set the domain of $y$ as $-3G\le y\le 3G$ for $\tau=0.02$ and  $-4G\le y\le 4G$ for $\tau=0.01$ and divide the domain into the uniform mesh system with $\Delta y=\frac{|G|}{\tau}\Delta x$.

Table \ref{table:acc_ap2} shows the accuracy of the modified AP-diff method when the parameter $\tau$ varies.
Compared to Table \ref{table:acc_hyp}, it is found that the AP-diff scheme is more accurate than the AP-hyp scheme except for the case with $\tau=0.01$ and $\lambda_0=10$.
We also confirmed that the spatial profiles of $\rho$ obtained by the AP-diff scheme and by the AP-hyp scheme coincide with each other within the numerical accuracy.
Thus, these results confirm that the AP-diff scheme can accurately reproduce the macroscopic density $\rho$ even for the case where the parameter $\tau$ is as small as $\lambda_0^{-1}$.

However, very interestingly, the distribution of the internal state $y$ obtained by the AP-diff scheme is quite different from those obtained by the AP-hyp and MC methods.
This illustrate the remark in Subsection \ref{subsec:wrong}.
Figure \ref{fig:dist_hyp_diff} shows the distribution of the internal state $y$ at $x=0.5$ obtained by the AP scheme.
The distribution of $y$ obtained by the AP-hyp and MC methods for the same parameter values is shown in Fig. \ref{fig:dist}.
In Figs. \ref{fig:dist_hyp_diff} (a) and (b), the results obtained by the AP-hyp scheme are also shown for comparison.
The distributions of $y$ obtained by the AP scheme also have peaks at $y=\pm G$.
However, the profiles are much more diffusive than those obtained by the AP-hyp scheme.

\section{Conclusion}
\label{sec:conclusion}

This paper deals with the numerical discretization of a two-stream kinetic model for bacterial chemotaxis with internal state.
We have proposed two schemes depending on the scaling parameters~: AP-diff for the model at diffusive scaling, AP-hyp at hyperbolic scaling. Theses scheme have the property to be consistent with the asymptotic limit, which is the so-called asymptotic preserving property.
In numerical parts, we implemented both AP-diff and AP-hyp schemes.
Numerical convergence of the schemes is evaluated in detail by comparing the results obtained for different meshes.
We also compare the numerical results obtained by the AP-diff and AP-hyp schemes to those obtained by a MC method and some analytical results in order to confirm the validity and consistency of the proposed schemes.

These numerical results confirm that the AP-diff scheme is uniformly accurate and efficient with respect to the variation of the tumbling frequency $\lambda_0$ at the diffusive scaling.
From the comparisons between the AP-diff scheme and MC method, the consistency between the two methods is clearly observed.
Remarkably, the asymptotic preserving nature of the AP-diff scheme is clearly illustrated by comparing the result obtained at a very small $\lambda_0^{-1}$, i.e., $\lambda_0^{-1}=10^{-8}$, to the numerical solution of the KS equation.

The AP-hyp scheme is robust with respect to the variation of the parameter value $\tau$.
Even when the parameter $\tau$ is very small, i.e., $\tau=10^{-8}$, the AP-hyp scheme can keep the numerical accuracy.
The consistency of the AP-hyp scheme and MC method is also confirmed in the time dynamics of the macroscopic density and the distribution function of the internal sate at steady state.

Finally, we also compare the AP-diff scheme and the AP-hyp scheme after a slight modification of the original AP-diff scheme to treat the variation in $\tau$.
Although the AP-diff scheme loses the asymptotic preserving property when the parameter $\tau$ is comparable to or smaller than $\lambda_0^{-1}$, it accurately reproduces the macroscopic density for moderately small values of $\tau$, where $\tau$ is still larger than $\lambda_0^{-1}$.
However, interestingly, the distribution function of the internal state obtained by the AP-diff scheme is quite different from that obtained by the AP-hyp scheme and MC method for moderately small values of $\tau$.
This interesting observation illustrates the mismatch of the time splitting strategy and the asymptotic behavior of the solution.
The importance of a careful strategy in the time splitting approach is discussed in detail in \S\ref{subsec:wrong}.

In summary, we conclude that the AP-diff scheme is very accurate and efficient at the diffusive scaling even in the continuum limit.
Even when the parameter $\tau$ is moderately small, the AP-diff scheme can accurately reproduce the macroscopic density while it cannot reproduce the distribution function of the internal state.
The AP-hyp scheme is valid in the variation of the parameter value $\tau$.
 It can reproduce the hyperbolic limit behavior both in the macroscopic density and the distribution function of the internal state.

\section*{Acknowledgments}
The authors would like to acknowledge partial funding from the Japan-France Integrated action Program PHC SAKURA, Grant number JPJSBP120193219.

%
%

\bibliographystyle{siamplain}

\newpage
\appendix
\section{Computational Procedure of the AP scheme}\label{app_ap}
The well-balanced and asymptotic preserving (AP) scheme (\ref{upwdiff})-(\ref{eq:pdiff2}) is implemented efficiently by the following procedure.
In the first step (\ref{upwdiff}), $p^{\pm,n+\frac 12}_{i,k}$ for $k\in\{-K,\cdots,-1\}$ are calculated by using the only lower tridiagonal matrix as
\[
p^{\pm,n+\frac 12}_{i,-K}=
p^{\pm,n}_{i,-K}/\left(1+\frac{\Delta t G}{\eps \Delta y}\right),
\]
and, for $k\in\{-K+1,\cdots,-1\}$,
\[
p^{\pm,n+\frac{1}{2}}_{i,k}
=\frac{\eps \Delta y p_{i,k}^{\pm,n}
-\Delta t y_{k-1} p_{i,k-1}^{\pm,n+\frac{1}{2}}}{\eps \Delta y- \Delta t y_k}.
\]
By the same token, $p^{\pm,n+\frac 12}_{i,k}$ for $k\in\{1,\cdots,K\}$ are calculated by using the only upper tridiagonal matrix.
Then, $p_{i,0}^{\pm,n+\frac{1}{2}}$ is calculated as
\[
p_{i,0}^{\pm,n+\frac{1}{2}}+\frac{\Delta t}{\eps \Delta y}
\left(
y_1 p_{i,1}^{\pm,n+\frac{1}{2}}
-y_{-1}p_{i,-1}^{\pm,n+\frac{1}{2}}
\right) = p_{i,0}^{\pm,n}.
\]

In the second step (\ref{eq:pdiff}), by inverting the 2-by-2 matrix, we can explicitly calculate $p^{\pm,n+1}_{i,k}$ for $i\in\{1,\cdots,I-1\}$ and $k\in\{-K,\cdots,K\}$ from
  \begin{align*}
    \left(1+\frac{2\DT}{\eps \DX}\right) p_{i,k}^{+,n+1} =\ & \left(1+\frac{\DT}{\eps\DX}\right) p_{i,k}^{+,n+\frac 12} + \frac{\DT}{\eps \DX} p_{i,k}^{-,n+\frac 12}  \\
    & + \frac{2\DT(\DT+\eps \DX) G}{\eps \DX^2 (2\eps G + \bar{\Lambda}_{k-\frac 12})}(p_{i-1,k-1}^{+,n+\frac 12} - p_{i,k}^{-,n+\frac 12})  \\
    & + \frac{2\DT^2 G}{\eps \DX^2 (2\eps G + \bar{\Lambda}_{k+\frac 12})}(p_{i+1,k+1}^{-,n+\frac 12} - p_{i,k}^{+,n+\frac 12}),
  \end{align*}
  and
  \begin{align*}
    \left(1+\frac{2\DT}{\eps \DX}\right) p_{i,k}^{-,n+1} =\ & \left(1+\frac{\DT}{\eps\DX}\right) p_{i,k}^{-,n+\frac 12} + \frac{\DT}{\eps \DX} p_{i,k}^{+,n+\frac 12}  \\
    & + \frac{2\DT^2 G}{\eps \DX^2 (2\eps G + \bar{\Lambda}_{k-\frac 12})}(p_{i-1,k-1}^{+,n+\frac 12} - p_{i,k}^{-,n+\frac 12}) \\
    & + \frac{2\DT(\DT+\eps \DX) G}{\eps \DX^2 (2\eps G + \bar{\Lambda}_{k+\frac 12})}(p_{i+1,k+1}^{+,n+\frac 12} - p_{i,k}^{-,n+\frac 12}).
  \end{align*}
Hereafter, we set $p_{i,K+1}^{\pm, n+\frac12}=p_{i,-K-1}^{\pm,n+\frac12}=0$, which is equivalent with the non-flux condition $J_{i,K+\frac12}^{\pm,n+\frac12}=J_{i,-K-\frac12}^{\pm,n+\frac12}=0$, for any $i$.
By using the boundary conditions $p^{-,n+1}_{I,k}=p^{+,n+1}_{I,k}$ and $p^{+,n+1}_{0,k}=p^{-,n+1}_{0,k}$ in (\ref{eq:pplusdiff}) and (\ref{eq:pmoinsdiff}), respectively, we can compute
\[
p_{I,k}^{+,n+1}=p_{I,k}^{+,n+\frac 12}
+\frac{2\Delta t G}{\Delta x(2\eps G+\bar \Lambda_{k-\frac 12})}
(p_{I-1,k-1}^{+,n+\frac 12}-p_{I,k}^{-,n+\frac 12}),
\]
\[
p^{-,n+1}_{0,k}=p^{-,n+\frac 12}_{0,k}
+\frac{2\Delta t G}{\Delta x(2\eps G+\bar \Lambda_{k+\frac 12})}
(p_{1,k+1}^{-,n+\frac 12}-p_{0,k}^{+,n+\frac 12}),
\]
for any $k$.

\section{Monte Carlo Method}\label{app_mc}

We have extended a Monte Carlo (MC) code of the classical velocity-jump kinetic equation proposed in \cite{PY,Y} to treat the internal state $y$.
The procedure of the MC method is described below.
Here, we consider \eqref{twovelo} in one-dimensional space $x\in[0,1]$ and the same boundary condition in \eqref{problem}.

\begin{enumerate}
\item The initial states of each MC particle, i.e., the position $x^0_{(\ell)}$, velocity $v^0_{(\ell)}$, and internal state $y^0_{(\ell)}$ at the time step $n=0$ are stochastically determined according to the initial distribution function $p^{\pm,0}(x,y)$.
	Hereafter, the superscript $n$ represents the time step and the subscript $\ell$ represents the index of each MC particle.
\item Given the position, velocity, and internal state of the $l$th MC particle at time step $n$, the position and internal state of the MC particle is advanced in the time-step size $\Delta t$ as
\[
x^{n+1}_{(\ell)}=x^n_{(\ell)} +v^n_{(\ell)}\Delta t,
\]
\[
y^{n+1}_{(\ell)}=\frac{\tau y^n_{(\ell)}+\Delta t Gv^n_{(\ell)}}{\tau+\Delta t}.
\]
\item The particle which moves beyond the boundary at $x=0$ (or $x=1$), say the $l'$th particle at $x=x^{n+1}_{(\ell')}<0$ (or $x=x^{n+1}_{(\ell')}>1$), is relocated at $x=-x^{n+1}_{(\ell')}$ (or $x=2-x^{n+1}_{(\ell')}$) with changing the sign of the velocity as $v=-v^{n+1}_{(\ell')}$.
  This process corresponds to the no-flux boundary conditions.
\item Tumbling of each particle is decided by the probability $\lambda_0\Lambda(y^{n+1}_{(\ell)})\Delta t/2$. The particles, which are decided to make tumble, change the sign of the velocities, $v_{(\ell)}^{n+1}=-v_{(\ell)}^n$, while other particles stay the velocities unchanged.
\item Return to the second process (2).

\end{enumerate}

The macroscopic population density in each lattice site $x\in[i\Delta x,(i+1)\Delta x]$ ($i=0,\cdots,I-1$) is calculated as
\[
\rho^n_i=\frac{1}{N_p\Delta x}\sum_{\ell=1}^{N_p}\int_{i\Delta x}^{(i+1)\Delta x}\delta(x-x^n_{(\ell)})dx,
\]
where $N_p$ is the total particle number.

In \S\ref{Sec_Num}, the time step size $\Delta t= 10^{-4}$ and the total particle number $N_p\simeq 1.2\times 10^6$ are used except the cases for $\lambda_0=10^4$.
For $\lambda_0=10^4$, we set $\Delta t=1\times 10^{-5}$.
%

\section{Note on the mass conservation for the AP-diff scheme}\label{mass_cons}
 
In this part, we consider the mass conservation for the AP-diff scheme.
\begin{lemma}\label{conservation}
  Let us consider the AP-diff scheme \eqref{upwdiff}--\eqref{eq:pdiff} when $G>0$ (the case $G<0$ being similar) complemented with the no-flux boundary conditions 
$J_{i,K+\frac12}^{\pm,n+\frac12}=J_{i,-K-\frac12}^{\pm,n+\frac12}=0$ which are equivalent to
\begin{equation}\label{eqpK}
p_{i,K+1}^{\pm,n+\frac12}=p_{i,-K-1}^{\pm,n+\frac12}=0,
\end{equation}
and with reflection boundary conditions
\begin{equation}\label{eqbc}
p_{0,k}^{+,n}=p_{0,k}^{-,n},
\qquad
p_{I,k}^{-,n}=p_{I,k}^{+,n}.
\end{equation}

Then, if we assume moreover that $p_{i,K}^{+,n}=p_{i,-K}^{-,n}=0$ for any $i\in \{0,\ldots,I\}$, we get that the AP-diff scheme is conservative.
\end{lemma}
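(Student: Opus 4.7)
The plan is to establish $M^{n+1}=M^n$ with $M^n := \sum_{i,k}(p^{+,n}_{i,k}+p^{-,n}_{i,k})$ by showing that each of the two half-steps preserves this total.

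For step 1, I would fix $i$ and sum \eqref{upwdiff1} over $k\in\{-K,\ldots,K\}$. The fluxes $J^{\pm,n+1/2}_{i,k+1/2}$ telescope and the no-flux boundary conditions annihilate the endpoints, yielding $\sum_k p^{\pm,n+1/2}_{i,k}=\sum_k p^{\pm,n}_{i,k}$ and hence $M^{n+1/2}=M^n$. A direct inspection of the upwind formula at $k=K$ for the $+$-sign moreover gives $\bigl(1+\DT|G|/(\eps\DY)\bigr)p^{+,n+1/2}_{i,K}=p^{+,n}_{i,K}$, and symmetrically at $k=-K$ for the $-$-sign, so the hypothesis $p^{+,n}_{i,K}=p^{-,n}_{i,-K}=0$ is inherited by the half-step quantities $p^{\pm,n+1/2}$.

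The substantive work is for step 2. The key algebraic observation is that adding \eqref{eq:pplusdiff} and \eqref{eq:pmoinsdiff} at the same $(i,k)$ causes the two source terms weighted by $1/(2\eps G+\bar\Lambda_{k-1/2})$ to cancel exactly, leaving the pair identity
\begin{equation*}
\bigl(p^{+,n+1}_{i,k}+p^{-,n+1}_{i-1,k-1}\bigr)-\bigl(p^{+,n+1/2}_{i,k}+p^{-,n+1/2}_{i-1,k-1}\bigr)=\frac{\DT}{\eps\DX}\bigl(\phi^{n+1}_{i-1,k-1}-\phi^{n+1}_{i,k}\bigr),
\end{equation*}
with $\phi^{n+1}_{i,k}:=p^{+,n+1}_{i,k}-p^{-,n+1}_{i,k}$. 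I would sum this identity over $i\in\{1,\ldots,I\}$, $k\in\{-K+1,\ldots,K\}$, which is exactly the range on which \eqref{eq:pdiff} updates cells; the LHS then collects $p^{+,n+1}$ over the block $U:=\{1,\ldots,I\}\times\{-K+1,\ldots,K\}$ and $p^{-,n+1}$ over its diagonal shift $V:=\{0,\ldots,I-1\}\times\{-K,\ldots,K-1\}$, while the RHS telescopes along the diagonal and, after using the reflection BCs $\phi^{n+1}_{0,k}=\phi^{n+1}_{I,k}=0$, collapses to the single residual $\sum_{i=1}^{I-1}\bigl(\phi^{n+1}_{i,-K}-\phi^{n+1}_{i,K}\bigr)$.

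To close the argument, I would reconstruct $M^{n+1}$ by adjoining the cells lying outside $U\cup V$. The reflection BCs identify $p^+_{0,k}$ with $p^-_{0,k}$ and $p^-_{I,k}$ with $p^+_{I,k}$, and the boundary formulas used at $i=0$ and $i=I$ (as described in the implementation) carry out the updates on those edges consistently; meanwhile the cells $p^+_{i,-K}$ and $p^-_{i,K}$ are never touched by step 2 and equal their step-1 values. Combined with the preservation of the zero hypothesis by step 1, the residual telescoping sum $\sum_{i=1}^{I-1}(\phi^{n+1}_{i,-K}-\phi^{n+1}_{i,K})$ matches exactly the contribution of these unupdated boundary cells, giving $M^{n+1}=M^{n+1/2}=M^n$. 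The main difficulty is precisely this $y$-boundary bookkeeping: the diagonal-characteristic pairing in \eqref{eq:pdiff} omits $p^+_{i,-K}$ and $p^-_{i,K}$ from the interior updates, so they must be reconciled by hand, and the assumption $p^{+,n}_{i,K}=p^{-,n}_{i,-K}=0$ on the opposite-sign boundaries is precisely the ingredient that allows the residual telescoping $\phi$-terms to be absorbed into the mass balance.
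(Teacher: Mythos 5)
Your step-1 analysis is correct (the telescoping of the fluxes, and the observation that $\bigl(1+\DT|G|/(\eps\DY)\bigr)p^{\pm,n+\frac12}_{i,\pm K}=p^{\pm,n}_{i,\pm K}$ so the zero hypothesis is inherited by the half-step), and your characteristic-pairing identity is a genuinely different decomposition from the paper's: the paper sums each of \eqref{eq:pplusdiff}--\eqref{eq:pmoinsdiff} over $k$ at fixed $i$, so that the relaxation terms $\pm\frac{\DT}{\eps\DX}(\bar p^{+,n+1}_i-\bar p^{-,n+1}_i)$ cancel and the source terms telescope in $i$, whereas you cancel the source terms along characteristics and telescope the relaxation terms along diagonals. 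Unfortunately the closing step, which you only assert, does not go through as written, for two concrete reasons.

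First, the conserved quantity is not your uniform-weight sum but the trapezoidal mass $M=\frac{\rho_0+\rho_I}{2}+\sum_{i=1}^{I-1}\rho_i$: under the reflection conditions \eqref{eqbc} the cells at $i=0$ and $i=I$ are mirrored, so counting both $p^{+}_{I,k}$ and $p^{-}_{I,k}$ with full weight double-counts the boundary mass, and the difference $M'-M=\frac{\rho_0+\rho_I}{2}$ changes by the (generally nonzero) net source fluxes into the cells $i=0$ and $i=I$. Second, and decisively, the rows $p^{+}_{i,-K}$ and $p^{-}_{i,K}$ \emph{are} updated by step 2: the scheme is applied for every $k\in\{-K,\ldots,K\}$ with the ghost convention \eqref{eqpK}, giving $p^{+,n+1}_{i,-K}=p^{+,n+\frac12}_{i,-K}-\frac{\DT}{\eps\DX}\phi^{n+1}_{i,-K}-a_{-K-\frac12}\,p^{-,n+\frac12}_{i,-K}$ and $p^{-,n+1}_{i,K}=p^{-,n+\frac12}_{i,K}+\frac{\DT}{\eps\DX}\phi^{n+1}_{i,K}-a_{K+\frac12}\,p^{+,n+\frac12}_{i,K}$, with $a_{k-\frac12}=\frac{2\DT G}{\DX(2\eps G+\bar\Lambda_{k-\frac12})}$. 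It is exactly the relaxation terms $-\frac{\DT}{\eps\DX}\phi^{n+1}_{i,-K}$ and $+\frac{\DT}{\eps\DX}\phi^{n+1}_{i,K}$ in these two rows that cancel your residual $\frac{\DT}{\eps\DX}\sum_i(\phi^{n+1}_{i,-K}-\phi^{n+1}_{i,K})$; if those cells were truly untouched, that residual would survive (under the hypotheses $\phi_{i,-K}=p^{+}_{i,-K}$ and $\phi_{i,K}=-p^{-}_{i,K}$ are not zero) and the mass would not be conserved. Correspondingly, you misattribute the role of the hypothesis $p^{+,n}_{i,K}=p^{-,n}_{i,-K}=0$: it does not absorb the $\phi$-residual, it kills the ghost-flux leak terms $-a_{-K-\frac12}p^{-,n+\frac12}_{i,-K}$ and $-a_{K+\frac12}p^{+,n+\frac12}_{i,K}$, which are the only genuine sources of mass loss. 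The skeleton of your argument can be repaired along these lines, but the sentence claiming the residual ``matches exactly the contribution of these unupdated boundary cells'' is precisely the cancellation you have not performed, and it rests on two incorrect premises.
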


\begin{proof}
We define the local density by
\begin{equation*}
  \rho_i=\sum_{k=-K}^K\left(p_{i,k}^++p_{i,k}^-\right)
\end{equation*}
and the total mass $M$, calculated by the trapezoidal law,
\begin{equation*}
M=\frac{\rho_0+\rho_I}{2}+\sum_{i=1}^{I-1}\rho_i.
\end{equation*}



Let 
$\bar p_i^{\pm,n}=\sum_{k=-K}^Kp_{i,k}^{\pm,n}$, and 
$\bar p_i^{\pm,n+\frac12}=\sum_{k=-K}^Kp_{i,k}^{\pm,n+\frac12}$,
we obtain, from \eqref{upwdiff1} with the no-flux boundary conditions, 
$\bar p_i^{\pm,n+\frac12}=\bar p_i^{\pm,n}.$

By summing up \eqref{eq:pplusdiff} and \eqref{eq:pmoinsdiff} w.r.t. $k$, we obtain, respectively,
\begin{subequations}
\begin{equation}\label{pbarplus}
\begin{split}
	\bar p_{i}^{+,n+1}
	&=\bar p_{i}^{+,n+\frac 12}
-\frac{\DT}{\eps\DX}\left(\bar p_{i}^{+,n+1}-\bar p_{i}^{-,n+1}\right)
+\sum_{k=-K}^K a_{k-\frac12}
\left(
p_{i-1,k-1}^{+,n+\frac12}-p_{i,k}^{-,n+\frac12}
\right)\\
&=\bar p_{i}^{+,n+\frac 12}
-\frac{\DT}{\eps\DX}\left(\bar p_{i}^{+,n+1}-\bar p_{i}^{-,n+1}\right)
+\sum_{k=-K+1}^K a_{k-\frac12}
\left(
p_{i-1,k-1}^{+,n+\frac12}-p_{i,k}^{-,n+\frac12}
\right)  \\
& \quad -a_{-K-\frac12}p_{i,-K}^{-,n+\frac12},
\end{split}
\end{equation}
where $a_{k-\frac12}=\frac{2\DT G}{\DX(2\eps G+\bar \Lambda_{k-\frac12})}$, and
\begin{equation}\label{pbarmoins}
\begin{split}
\bar p_{i}^{-,n+1}
&=\bar p_{i}^{-,n+\frac12}
+\frac{\DT}{\eps\DX}\left(\bar p_{i}^{+,n+1}-\bar p_{i}^{-,n+1}\right)
+\sum_{k=-K}^K a_{k+\frac12}
\left(
p_{i+1,k+1}^{-,n+\frac12}-p_{i,k}^{+,n+\frac12}
\right),\\
&=\bar p_{i}^{-,n+\frac12}
+\frac{\DT}{\eps\DX}\left(\bar p_{i}^{+,n+1}-\bar p_{i}^{-,n+1}\right)
+\sum_{k=-K+1}^{K} a_{k-\frac12}
\left(
p_{i+1,k}^{-,n+\frac12}-p_{i,k-1}^{+,n+\frac12}
\right)  \\
&\quad -a_{K+\frac12}p_{i,K}^{+,n+\frac12},
\end{split}
\end{equation}
\end{subequations}
where we use \eqref{eqpK}.
Thus, by summing up the above equations, we have
\begin{align*}
\rho_i^{n+1} =&\ \rho_i^n+
\sum_{k=-K+1}^K a_{k-\frac12}
\left(
p_{i-1,k-1}^{+,n+\frac12}-p_{i,k-1}^{+,n+\frac12}
+p_{i+1,k}^{-,n+\frac12}-p_{i,k}^{-,n+\frac12}
\right)  \\
&\ -a_{-K-\frac12}p_{i,-K}^{-,n+\frac12}
-a_{K+\frac12}p_{i,K}^{+,n+\frac12},
\end{align*}
and by further summing up for $i=1$ to $i=I-1$, we have
\begin{equation}\label{eqrhosum}
\begin{split}
\sum_{i=1}^{I-1}\rho_i^{n+1}
=\sum_{i=1}^{I-1}\rho_i^n
+\sum_{k=-K+1}^K a_{k-\frac12}
\left(
p_{0,k-1}^{+,n+\frac12}-p_{I-1,k-1}^{+,n+\frac12}
+p_{I,k}^{-,n+\frac12}-p_{1,k}^{-,n+\frac12}
\right)&\\
-\sum_{i=1}^{I-1}\left(
a_{-K-\frac12}p_{i,-K}^{-,n+\frac12}
+a_{K+\frac12}p_{i,K}^{+,n+\frac12}
\right)&.
\end{split}
\end{equation}
On the other hand, $\rho_I$ and $\rho_0$ are calculated, respectively,  
from (\ref{pbarmoins}) and (\ref{pbarplus}) as
\begin{equation}\label{eqrhoI}
	\frac{\rho_I^{n+1}}{2}=\frac{\rho_I^n}{2}
	+\sum_{k=-K+1}^K a_{k-\frac12}
\left(
p_{I-1,k-1}^{+,n+\frac12}-p_{I,k}^{-,n+\frac12}
\right)
-a_{-K-\frac12}p_{I,-K}^{-,n+\frac12},
\end{equation}
\begin{equation}\label{eqrho0}
\frac{\rho_0^{n+1}}{2}=\frac{\rho_0^n}{2}
	+\sum_{k=-K+1}^{K} a_{k-\frac12}
\left(
p_{1,k}^{-,n+\frac12}-p_{0,k-1}^{+,n+\frac12}
\right)
-a_{K+\frac12}p_{0,K}^{+,n+\frac12},
\end{equation}
where we use the reflection condition (\ref{eqbc}).

From (\ref{eqrhosum})--(\ref{eqrho0}), we obtain
\begin{equation*}
M^{n+1}=M^n
-\left(
a_{-K-\frac12}\sum_{i=1}^Ip_{i,-K}^{-,n+\frac12}
+a_{K+\frac12}\sum_{i=0}^{I-1} p_{i,K}^{+,n+\frac12}
\right).
\end{equation*}
Since $a_{k-\frac12}>0$, the condition for the mass conservation reads
$p_{i,K}^{+,n}=p_{i,-K}^{-,n}=0$, for any $i\in\{0,\ldots,I\}$.
\end{proof}

\end{document}